\definecolor{verylight}{gray}{0.97}
\definecolor{light}{gray}{0.9}
\definecolor{medium}{gray}{0.85}
\definecolor{dark}{gray}{0.6}
\def\NZQ{\Bbb}               
\def\QQ{{\NZQ Q}}
\def\ZZ{{\NZQ Z}}
\def\KK{{\NZQ K}}
\def\frk{\frak}               
\def\Phi{{\frk n}}
\def\Phi{{\frk N}}
\def\opn#1#2{\def#1{\operatorname{#2}}} 
\opn\chara{char} \opn\length{\ell} \opn\pd{pd} \opn\rk{rk}
\opn\projdim{proj\,dim} \opn\injdim{inj\,dim} \opn\rank{rank}
\opn\depth{depth} \opn\grade{grade} \opn\height{height}
\opn\embdim{emb\,dim} \opn\codim{codim}
\opn\Tr{Tr} \opn\bigrank{big\,rank}
\opn\superheight{superheight}\opn\lcm{lcm}
\opn\trdeg{tr\,deg}
\opn\reg{reg} \opn\lreg{lreg} \opn\ini{in} \opn\lpd{lpd}
\opn\size{size}\opn\bigsize{bigsize}
\opn\cosize{cosize}\opn\bigcosize{bigcosize}
\opn\sdepth{sdepth}\opn\sreg{sreg}
\opn\link{link}\opn\fdepth{fdepth} \opn\trdeg{trdeg} \opn\mod{mod}
\opn\Conv{Conv}
\opn\div{div} \opn\Div{Div} \opn\cl{cl} \opn\Cl{Cl}
\opn\Spec{Spec} \opn\Supp{Supp} \opn\supp{supp} \opn\Sing{Sing}
\opn\Ass{Ass} \opn\Min{Min}\opn\Mon{Mon} \opn\dstab{dstab} \opn\astab{astab}
\opn\Syz{Syz}
\opn\Ann{Ann} \opn\Rad{Rad} \opn\Soc{Soc} \opn\Aut{Aut}
\opn\Im{Im} \opn\Ker{Ker} \opn\Coker{Coker} \opn\Am{Am}
\opn\Hom{Hom} \opn\Tor{Tor} \opn\Ext{Ext} \opn\End{End}
\opn\Aut{Aut} \opn\id{id}
\opn\nat{nat}
\opn\pff{pf}
\opn\Pf{Pf} \opn\GL{GL} \opn\SL{SL} \opn\mod{mod} \opn\ord{ord}
\opn\Gin{Gin} \opn\Hilb{Hilb}\opn\sort{sort}
\opn\S{S} \opn\dim{dim} \opn\supp{supp}\opn\trdeg{trdeg}\opn\sort{sort}
\opn\aff{aff} \opn\relint{relint} \opn\st{st}
\opn\lk{lk} \opn\cn{cn} \opn\core{core} \opn\vol{vol}
\opn\link{link} \opn\star{star}\opn\lex{lex}
\opn\Ehr{Ehr}\opn\conv{Conv}
\opn\gr{gr}
\def\pot#1#2{#1[\kern-0.28ex[#2]\kern-0.28ex]}
\opn\dirlim{\underrightarrow{\lim}}
\opn\inivlim{\underleftarrow{\lim}}
\def\Implies{\ifmmode\Longrightarrow \else
        \unskip${}\Longrightarrow{}$\ignorespaces\fi}
\def\implies{\ifmmode\Rightarrow \else
        \unskip${}\Rightarrow{}$\ignorespaces\fi}
\def\iff{\ifmmode\Longleftrightarrow \else
        \unskip${}\Longleftrightarrow{}$\ignorespaces\fi}
\newtheorem{Theorem}{Theorem}[section]
 \newtheorem{Lemma}[Theorem]{Lemma}
 \newtheorem{Corollary}[Theorem]{Corollary}
 \newtheorem{Proposition}[Theorem]{Proposition}
 \newtheorem{Remark}[Theorem]{Remark}
 \newtheorem{Example}[Theorem]{Example}
 \newtheorem{Definition}[Theorem]{Definition}
 \newtheorem*{Theorem2}{Theorem}
\let\epsilon\varepsilon
\let\kappa=\varkappa
\def\qed{\ifhmode\textqed\fi
      \ifmmode\ifinner\quad\qedsymbol\else\dispqed\fi\fi}
\def\textqed{\unskip\nobreak\penalty50
       \hskip2em\hbox{}\nobreak\hfil\qedsymbol
       \parfillskip=0pt \finalhyphendemerits=0}
\def\dispqed{\rlap{\qquad\qedsymbol}}
\opn\dis{dis}
\def\pnt{{\raise0.5mm\hbox{\large\bf.}}}
\opn\Lex{Lex}
\begin{document}
 \title {Gorenstein property for phylogenetic trivalent trees}

 \author {Rodica Dinu, Martin Vodi\v{c}ka}

\address{Faculty of Mathematics and Computer Science, University of Bucharest, Str. Academiei 14, 010014 Bucharest, Romania}
\email{rdinu@fmi.unibuc.ro}

\address{Max Planck Institute for Mathematics in Sciences, Inselstrasse 22, 041 03 Leipzig, Germany}
\email{vodicka@mis.mpg.de}


 \begin{abstract}
We study the Gorenstein property for phylogenetic group-based models. We prove that for the groups $\mathbb Z_3$ and $\mathbb Z_2\times \mathbb Z_2$ and trivalent trees the associated polytopes are always Gorenstein extending the results of Buczy\'nska and Wi\'sniewski for the group $\mathbb Z_2$.
 \end{abstract}

\thanks{}
\subjclass[2010]{52B20, 14M25}
\keywords{group-based model, phylogenetic trivalent tree, Gorenstein polytope}

 \maketitle

\section*{Introduction}
\indent Phylogenetics is a science that models evolution and describes mutations in this process \cite{fels}. This topic reveals many connections to several branches of mathematics such as algebraic geometry \cite{erss}, \cite{bw}, \cite{mateusz}, and combinatorics \cite{mateusz_degree}, \cite{marysia_comb}.\\
 \indent One central object in phylogenetics is the tree model, which is a parametric family of probability distributions. The tree model is based on a rooted tree, a finite set of states $S$ and a family $\mathcal{M}$ of transition matrices along the edges of the tree, which is usually given by linear subspaces of all $|S|\times |S|$ matrices. When $\mathcal{M}$ is a proper subspace of matrices, the models reflect symmetries among the elements of $S$. These are usually encoded by the action of a finite group $G$ on $S$. This fact allows us to regard $\mathcal{M}$ as the space of $G$-invariants. Such models are called \textit{equivariant} \cite{draisma1}. If $G$ is the trivial group, we obtain the general Markov model, which, from an algebraic-geometry perspective, is linked to secant varieties of Segre products, see \cite{erss}. A {\it group-based} model is a tree model where the set $S$ is a group which acts on itself and parameters are $G$-invariant. \\
\indent Motivated by biology, we consider the algebraic variety $X(T, G)$ associated to a tree $T$, describing the evolution of species, and an abelian group $G$, distinguishing a model of evolution. The construction of this variety is described in \cite{erss}. For an arbitrary model, this variety is not necessarily toric. However, by  \cite{evans}, \cite{sz} and \cite{ss}, group-based models allow a monomial parametrization, thus this variety is toric. We are interested in investigating geometric properties of these algebraic varieties, following the path initiated by Sturmfels and Sullivant \cite{ss}. The construction of the defining polytope of the toric variety corresponding to a group-based model depends on the tree and on the group. There is an algorithm for finding the polytope, which is presented in \cite{mateusz} and implemented in \cite{marysia}. The biological motivation for considering variety $X(T,G)$ can be consulted in \cite{biocomp} and \cite{erss}.
 The 3-parameter Kimura model \cite{kimura} is the most biologically meaningful group-based model, since in this case $S=\{A,C,G,T\}$. Thus the basis elements correspond to the nucleotides of DNA: adenine (A), cytosine (C), guanine (G) and thymine (T). The action of the group $G=\ZZ_2\times \ZZ_2$ is actually the Watson-Crick complementarity. This model is also very interesting from a mathematical perspective. Recent results may be found e.g. in \cite{cs1}, \cite{cs2}, \cite{csm}, \cite{kaie}, \cite{mateusz2}, \cite{me}.\\
 \indent Not all group-based models give rise to normal toric varieties and there is no known a complete classification of the models for which the associated toric varieties are normal. In Section \ref{section_normality}, Proposition \ref{notnormal}, we show that for an abelian group $G$ such that $|G|=2k, k\geq 3$, and the tripod $T$, the projective algebraic variety $X(T,G)$ representing the model is not projectively normal, extending the result of \cite{marysia_mm}. In Section \ref{section_facets} we give the facet description of the defining polytope of $X(T, \ZZ_3)$, where $T$ is the $m$-claw tree, in Theorem \ref{facetsZ3}.\\
 In Section \ref{section_fiberproduct}, we investigate the toric fiber product of two Gorenstein polytopes of the same Gorenstein index and we obtain, in Theorem \ref{fibGor}, the following result:
\begin{Theorem2}
 Let  $P_1, P_2$ be two Gorenstein polytopes with the same Gorenstein index $k$.  Let $\pi_i:P_i\rightarrow\mathbb R^n, (i=1,2) $ be projections such that $\pi_1(P_1)=\pi_2(P_2)=\Delta_n$, where $\Delta_n$ is the standard simplex. Let $p_1,p_2$ be the unique interior lattice points of $kP_1$ and $kP_2$ respectively. Assume that $\pi_1(p_1)=\pi_2(p_2)$. Then the fiber product $P=P_1\times_{\Delta_n}P_2$ is also Gorenstein of index $k$.
\end{Theorem2}

 In a seminal paper of Buczy\'nska and Wi\'sniewski \cite{bw}, in Theorem 2.15, the authors showed that $X(T, \mathbb{Z}_2)$, the variety associated to any trivalent tree and the group $\mathbb{Z}_2$, is Gorenstein Fano variety of index $4$. In Section \ref{section_gorenstein}, we extend this result to other small groups and we give the main result of this paper:
\begin{Theorem2}
Let $\mathcal{T}$ be a trivalent tree. Then the polytope associated to $\mathcal{T}$ and the group $\mathbb{Z}_{2}\times \mathbb{Z}_{2}$ is Gorenstein of index $4$ and the polytope associated to $\mathcal{T}$ and the group $\mathbb{Z}_{3}$ is Gorenstein of index $3$.
\end{Theorem2}
 We reserve the last section, Section \ref{section_z3normal}, of the paper to show the normality for $P_m$, the defining polytope of the variety $X(T, \mathbb Z_3)$, where $T$ is the $m$-claw tree, in Theorem \ref{normalz3}. So far normality was confirmed only for two other models: $G=\ZZ_2$ and 3-Kimura model \cite{martinko}.\\

\section{Normality} \label{section_normality}

Normality is an important property of lattice polytopes, see \cite{brunsg}, \cite{sturmfels}, \cite{binomialideals}, \cite{cox}. Recall that a polytope $P$ whose vertices generate lattice $L$ is {\it normal} if every point in $kP\cap L$ can be expressed as a sum of $k$ points from $kP\cap L$. The normality of polytope implies the projective normality of the associated projective toric variety.

When one wants to discuss the Gorenstein property, one usually assume that the polytopes are normal. Part of definition for being Gorenstein polytope is being normal (\cite[Section 12.5]{monomialideals}). In the case of a non-normal polytope $P$, we can still ask whether the algebra $\mathbb C[P]$ is Gorenstein, but the criteria to verify that are more difficult, see \cite{hoa}. In this article, we will study the Gorenstein property only for normal polytopes. Thus, we would like to know which polytopes associated to the phylogenetic tree models are normal.

Unfortunately, for many models we do not know whether the associated polytopes are normal. We give a short summary of known results and then we prove new results.
By {\it $m$-claw tree} we mean the graph having $m$ vertices, one of degree $m-1$ and all the other are leaves. In particular, when $m=3$, we call this graph a {\it tripod}.

Micha{\l}ek showed in \cite[Lemma 5.1]{mateusz} that, for a given group, in order to check the normality for the algebraic variety for this group and any trivalent tree, it is enough to check normality for the chosen group and the tripod. More generally, if one wants to check the normality for all trees, it is enough to verify normality for claw trees.

Buczy\'nska and Wi\'sniewski \cite{bw} proved that the toric variety associated to any trivalent tree and the group $\mathbb{Z}_2$ is projectively normal. The same result holds actually for any tree by using \cite{martinko}, where the normality was proved for the $3$-Kimura model.

However, there are examples of group-based models whose associated toric variety is not normal. By \cite[Computation 4.1]{marysia_mm}, the polytope associated to the tripod and any $G\in~\{\mathbb{Z}_6, \mathbb{Z}_8, \mathbb{Z}_2\times \mathbb{Z}_2 \times \mathbb{Z}_2, \mathbb{Z}_4\times \mathbb{Z}_2\}$ gives non-normal algebraic varieties representing the model. The next result shows that, in fact, we always obtain non-normal models for the tripod and any abelian group $G$ such that $|G|=2k, k\geq 3$.

Let $P_{G,m}\subseteq \mathbb{R}^{m|G|}$ be the polytope associated to the $m$-claw tree and the group $G$. We label the coordinates of a point $x\in\mathbb R^{m|G|}$ by $x_g^j$ where $1\le j\le m$ and $g\in G$, i.e. $g$ corresponds to group element and $j$ to the edge of the tree. For any point $x\in \mathbb Z_{\ge 0}^{m|G|}$ we define its $G$-presentation as an $m$-tuple $(G_1,\dots,G_m)$ of multisets of elements of $G$. Every element $g\in G$ appears exactly $x_g^j$ times in the multiset $G_j$. We denote by $x(G_1,\dots,G_m)$ the point with the corresponding $G$-presentation. In the case where all multisets contain exactly one element we may simply say that the $G$-presentation is just an $m$-tuple of elements of $G$.

The vertex description of the polytope $P_{G,m}$ is known; see \cite{ss}, \cite{bw}, \cite{mateusz}.
We recall this description and we formulate in the language of $G$-presentations the vertex description of $P_{G,m}$.

\begin{Theorem}The vertices of the polytope $P_{G,m}$ associated to the $m$-claw tree and the finite abelian group $G$ are exactly the points $x(g_1,\dots,g_m)$ with $g_1+\dots+g_m=0$.

Let $L_{G,m}$ be the lattice generated by vertices of $P_{G,m}$. Then
 $$L_{G,m}=\{x\in \mathbb Z^{m|G|}:\sum_{g,j}x_g^j\cdot g=0,
\forall \text{ } 1\le j,j' \le m, \sum_g x_g^j=\sum_g x_g^{j'} \}.$$
where the first sum is taken in the group $G$.
\end{Theorem}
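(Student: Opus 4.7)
The plan is to treat the vertex description and the lattice description separately.

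For the vertex description, I would invoke the standard toric construction of $X(T_m, G)$ from the group-based model (cf.\ \cite{ss}, \cite{mateusz}). For the $m$-claw tree $T_m$ with center $c$ and leaves $\ell_1,\dots,\ell_m$, the monomial parametrization assigns to each edge $(c,\ell_j)$ a label $g_j \in G$, with the balance condition $g_1 + \dots + g_m = 0$ at $c$ imposed by Fourier duality on $G$. The polytope $P_{G,m}$ is the convex hull of the resulting $0/1$ vectors $x(g_1,\dots,g_m)$. To see these are vertices and not merely generators, observe that each such $0/1$ vector is already an extreme point of the enclosing product of simplices $\prod_{j=1}^{m} \conv\{e_g^j : g \in G\}$; consequently, it remains extreme in the smaller polytope $P_{G,m}$.

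For the lattice description, write $L'$ for the right-hand side. The inclusion $L_{G,m} \subseteq L'$ is immediate: each vertex $x(g_1,\dots,g_m)$ has row sums $\sum_g x_g^j = 1$ (in particular equal across $j$) and group sum $\sum_{g,j} x_g^j\cdot g = g_1 + \dots + g_m = 0$, and both conditions are preserved under integer combinations.

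For the reverse inclusion $L' \subseteq L_{G,m}$, fix a base vertex $v_0 = x(0,\dots,0)$. Given $x \in L'$ with common row sum $k$, subtracting $k\,v_0$ reduces matters to the sublattice $L'_0 \subseteq L'$ of elements whose row sums are all zero. The main tool is the family of two-row ``transfer moves''
\[
x(g_1, g_2, g_3, \dots, g_m) - x(g_1 + h, g_2 - h, g_3, \dots, g_m),
\]
which lie in $L_{G,m}$ as differences of two genuine vertices (both summing to $0$ in $G$) and realise arbitrary transfers of a group element $h$ between any two rows (after reindexing). Using these moves I would inductively collect the support of an arbitrary $x \in L'_0$ into a single row, reducing to an element supported in row $1$ with augmentation zero and group sum zero. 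Such single-row elements are integer combinations of the standard ``syzygies'' $e_a + e_b - e_{a+b} - e_0$ and $e_a + e_{-a} - 2 e_0$, each of which can be realised as a $\ZZ$-combination of vertex differences by temporarily placing a compensating pair of terms in a second row and then cancelling that second row via the transfer moves above.

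The main obstacle is the bookkeeping in this final lifting step, where a single-row syzygy must be paired with a compensating combination in another row and then disassembled by transfer moves. The argument is facilitated by $G$ being abelian, so that orders of transfers are irrelevant, and by the presence of the base vertex $v_0$, which lets us move freely between values of the common row sum.
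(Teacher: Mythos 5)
The paper does not actually prove this statement: it is recalled from the literature with citations to Sturmfels--Sullivant and Micha{\l}ek, so there is no internal proof to compare against, and your self-contained argument is a genuine addition. The vertex part is standard (each $x(g_1,\dots,g_m)$ is a vertex of the enclosing product of simplices, hence of $P_{G,m}$), and the lattice part is where the content lies: the inclusion $L_{G,m}\subseteq L'$ is immediate, and your reverse inclusion via (i) subtracting $k\,x(0,\dots,0)$ to kill the common row sum, (ii) transfer moves $x(g_1,g_2,\dots)-x(g_1+h,g_2-h,\dots)$ to push all support into row~1 while preserving the total group sum, and (iii) the identification of the single-row kernel $\{y\in\ZZ^G:\sum_g y_g=0,\ \sum_g y_g\cdot g=0\}$ with the span of $e_a+e_b-e_{a+b}-e_0$ (which holds because $d_a:=e_a-e_0$ satisfies $d_a+d_b\equiv d_{a+b}$ modulo these relations, so $y=\sum_g y_g d_g\equiv d_{\sum_g y_g\cdot g}=d_0=0$) --- all goes through. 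One step of your sketch is looser than it should be: when you lift the syzygy $e_a^1+e_b^1-e_{a+b}^1-e_0^1$ to the vertex combination $x(a,-a,0,\dots)+x(b,-b,0,\dots)-x(a{+}b,-a{-}b,0,\dots)-x(0,\dots,0)$, the residue in row~2 is not a ``compensating pair'' but the full four-term syzygy $e_{-a}^2+e_{-b}^2-e_{-a-b}^2-e_0^2$; it is nonetheless cancelled by exactly two transfer moves, e.g. $x(0,-a,0,\dots,a)-x(-a,0,0,\dots,a)$ and $x(-a,-b,0,\dots,a{+}b)-x(0,-a{-}b,0,\dots,a{+}b)$, whose row-1 contributions $e_0^1-e_{-a}^1$ and $e_{-a}^1-e_0^1$ cancel each other. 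With that small computation made explicit, your proof is complete and provides an elementary replacement for the paper's appeal to the literature.
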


\begin{Example}
Let $G=\mathbb Z_2$, $m=3$. Then
 \begin{align*}
P_{\ZZ_2,3}&=\Conv(\{x(0,0,0), x(1,1,0), x(1,0,1), x(0,1,1)\})
\\&=\Conv(\{(1,0,1,0,1,0),(0,1,0,1,1,0),(0,1,1,0,0,1),(1,0,0,1,0,1)\}).
\end{align*}
Then the lattice generated by the vertices of the polytope $P_{\ZZ_2,3}$ is
\begin{align*}
L_{\ZZ_2,3}&=\{x\in\mathbb Z^6: 2\mid x_2+x_4+x_6,\ x_1+x_2=x_3+x_4=x_5+x_6\}.
\end{align*}
\end{Example}

\begin{Proposition} \label{notnormal}
Let $G$ be an abelian group with even cardinality and at least 6 elements. Then the polytope $P_{G,3}$ associated to the tripod and the group $G$ is not normal. Thus, the projective algebraic variety representing the model is not projectively normal.
\end{Proposition}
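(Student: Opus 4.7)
\noindent\emph{Proof plan.} The strategy is to exhibit an explicit lattice point of some dilate $kP_{G,3}$ that cannot be written as a sum of $k$ vertices. As a preliminary observation, the polytope $2P_{G,3}$ is already normal: if $p\in 2P_{G,3}\cap L_{G,3}$ has a vertex $v$ in its support, then $p-v$ has entries in $\{0,1\}$, block-sums equal to $1$, and lies in $L_{G,3}$, hence is itself a vertex, so $p=v+(p-v)$ decomposes. Consequently, any non-normality witness must live in $kP_{G,3}$ with $k\geq 3$, and it suffices to produce $p\in kP_{G,3}\cap L_{G,3}$ such that $p-w\notin (k-1)P_{G,3}$ for every vertex $w$ in the support of $p$; indeed, otherwise any decomposition $p=w_1+\cdots+w_k$ would yield $p-w_1\in(k-1)P_{G,3}$.

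To build such a $p$, I would exploit the group structure. Because $|G|$ is even, fix an element $h\in G$ of order $2$ and set $H=\langle h\rangle$; because $k=|G|/2\geq 3$, the quotient $G/H$ has at least three elements, so there exist $a,b,c\in G$ lying in three distinct cosets of $H$ with $a+b+c=0$. The crucial observation is that, thanks to $2h=0$, the four triples
\[ v_1=(a,b,c),\quad v_2=(a{+}h,b{+}h,c),\quad v_3=(a{+}h,b,c{+}h),\quad v_4=(a,b{+}h,c{+}h) \]
are all vertices of $P_{G,3}$, while the four opposite triples in the $2{\times}2{\times}2$ cube $\{a,a{+}h\}\times\{b,b{+}h\}\times\{c,c{+}h\}$ have coordinate-sum $h\neq 0$ and are not vertices. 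The candidate witness $p$ would be a carefully chosen lattice point in $kP_{G,3}$ built from $v_1,\dots,v_4$ (combined if necessary with further vertices coming from other cosets of $H$), engineered so that for every vertex $w$ in the support of $p$, the reduced multisets of $p-w$ admit no triple summing to $0$ in $G$.

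The main obstacle is this last incompatibility condition — ensuring that \emph{every} peeling of a vertex destroys all remaining vertices in the support — uniformly across every abelian group $G$ with $|G|=2k$, $k\geq 3$. This is where the hypothesis $|G|\geq 6$ is essential: with $|G/H|\geq 3$ there are enough cosets available to arrange the multisets of $p$ so as to block every peeling, whereas in the excluded small cases $|G|\in\{2,4\}$ one can always complete a peeled vertex into a valid decomposition, and the polytope is in fact normal. Once the construction is in place, non-normality of $P_{G,3}$ follows immediately, and projective non-normality of the associated variety is a direct consequence.
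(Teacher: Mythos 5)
There is a genuine gap: your submission is a plan, not a proof. The witness point is never constructed, and the step you yourself flag as ``the main obstacle'' --- arranging the multisets so that \emph{every} peeling of a supported vertex leaves a point with no zero-sum transversal --- is precisely the entire content of the proposition. The cube construction you do write down does not work by itself: the point $p=v_1+v_2+v_3+v_4$ has $G$-presentation $(\{a,a,a{+}h,a{+}h\},\{b,b,b{+}h,b{+}h\},\{c,c,c{+}h,c{+}h\})$, and its zero-sum transversals are exactly $v_1,\dots,v_4$ (the even-weight corners of the cube), so $p$ decomposes as $v_1+v_2+v_3+v_4$ and is \emph{not} a witness. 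Everything therefore rests on the unspecified ``further vertices coming from other cosets,'' for which you give no recipe and no verification. There is also reason to doubt the plan closes with only the data you fix (an order-$2$ element $h$ and three distinct cosets of $\langle h\rangle$): the paper's witness for $G\not\cong\mathbb Z_2^n$ is $x(\{0,g,h,g{+}h\},\{0,g,h,g{+}h\},\{0,g,-2h,g{-}2h\})\in 4P_{G,3}$, whose third multiset is deliberately \emph{asymmetric} and which crucially uses an element $h$ with $2h\notin\{0,g\}$ (here $g$ is the order-$2$ element). Proving such an $h$ exists needs a separate argument, it fails entirely for $G\cong\mathbb Z_2^n$, and the paper must then produce a different explicit point for $\mathbb Z_2^3$ and pass to subgroups via \cite[Proposition 4.2]{marysia_mm}. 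Your plan anticipates neither this extra element nor this case split.

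Two smaller issues. Your preliminary claim that $2P_{G,3}$ is normal is itself unproven: the argument is conditional on every $p\in 2P_{G,3}\cap L_{G,3}$ having a vertex in its support, which is exactly what would need to be shown; in any case the claim is unnecessary, since one is free to exhibit a witness at any level (the paper's sits at $k=4$). And your reduction to ``$p-w\notin(k-1)P_{G,3}$ for every supported vertex $w$'' is valid but heavier than needed: a decomposition of $p$ into $k$ vertices is the same as a partition of its $G$-presentation into $k$ zero-sum transversals, and for the paper's explicit $p$ one checks directly that the few admissible transversals pairwise collide, so no induction on peelings is required.
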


\begin{proof}
Suppose that $G\neq \mathbb Z_2^n$. We prove that there exist elements $g,h\in G$ such that $2g=0$ and $2h\neq 0,g$.

Suppose for contradiction that such elements do not exist. Since $G$ has even number of elements we can find an element $g$ of order 2. By assumption for every $h\in G$ we have $2h=0$ or $2h=g$. Therefore, every element of $G$ has order 2 or 4. This implies $G=\ZZ_2^k\times \ZZ_4^l$ with $k\ge 0, l\ge 1$. However, for the group $\mathbb Z_2\times \mathbb Z_4$ we are able to find such $g,h$, e.g. $g=(1,0), h=(0,1)$. All other groups in this form (except of $\mathbb Z_4$ which does not have 6 elements) contain $\mathbb Z_2\times \mathbb Z_4$ as a subgroup. In this case we may pick the same elements from the subgroup.

We pick such $g,h$ and consider the point $$p=x(\{0,g,h,g+h\},\{0,g,h,g+h\},\{0,g,-2h,g-2h\}).$$ We notice that $$2p=x(0,0,0)+x(g,g,0)+x(g,0,g)+x(0,g,g)+x(h,h,-2h)+$$ $$+x(g+h,g+h,-2h)+x(g+h,h,g-2h)+x(h,g+h,g-2h).$$ Since each summand is a vertex of $P_{G,3}$ we obtain that $p\in 4P$. Moreover, $p\in L_{G,3}$ since the sum of all elements in $G$-presentation of $p$ is 0. We show that $p$ cannot be written as a sum of 4 vertices of $P_{G,3}$.

Expressing $p$ in such way is equivalent to splitting elements from $G$-presentation into triples with sum 0 and consisting of one element from each multiset. It is easy to verify that this is not possible.  Thus, $P_{G,3}$ is not normal.

For the group $G=\mathbb Z_2^3$ we conclude in an analogous way by considering the point $$p=x(\{(0,0,0),(0,0,1),(1,0,0),(1,0,1)\},\{(0,0,0),(0,0,1),(0,1,0),(0,1,1)\},$$ $$\{(0,0,0),(0,0,1),(1,1,0),(1,1,1)\}).$$ Similarly as in the previous case, we can check that $p\in 4P_{G,3}$ but it cannot be written as the sum of 4 vertices of $P_{G,3}$. The groups $G=\mathbb Z_2^n$ contain $\mathbb Z_2^3$ as a subgroup and, by \cite[Proposition 4.2]{marysia_mm}, we obtain the non-normality for the polytope associated to the tripod and the group $G=\mathbb Z_2^n$.
\end{proof}
\begin{Remark}
Non-normality for the tripod gives non-normality for any non-trivial tree (not a path). Indeed, the polytope for any tree always contains a face which is isomorphic to the polytope for the tripod (and the same group).
\end{Remark}

\section{Facet description for the polytope associated to $\mathbb{Z}_3$} \label{section_facets}

 In this section we consider the variety $X(T, \mathbb{Z}_3)$, where $T$ is the $m$-claw tree. For this part we consider only the projection of $P_{\mathbb Z_3,m}$ on the $2m$ coordinates which correspond to non-zero elements of $\mathbb Z_3$. We denote this projection simply by $P_m$.
The polytope $P_m$ has symmetries that can be described by group actions:
 \begin{itemize}
\item action of $H_{m}=\{(h_1, h_2,\dots, h_m)\in \mathbb{Z}_3^m: h_1+h_2+\dots+h_m=0\}$: For $h\in H_{m}$ and $x\in \mathbb{R}^{2m}$ we define $(hx)_i^j=x_{i+h_j}^j$, where the sum $i+h_j$ is in $\mathbb Z_3$ and $x_0^j=1-x_1^j-x_2^j$.
    \item action of $\mathbb S_m$: For $\sigma\in S_{m}$ and $x\in \mathbb{R}^{2m}$ we define $(\sigma x)_i^{\sigma(j)}=x_{i}^j$.
    \item action of $\Aut(\mathbb Z_3)$: For $\varphi\in \Aut(\mathbb Z_3)$ and $x\in \mathbb{R}^{2m}$ we define $(\varphi x)_{\varphi(i)}^j=x_{i}^j$.
\end{itemize}

\begin{Lemma}\label{vert0}
The vertices of $P_m$ which are connected with 0 by an edge are only those with at most 3 non-zero coordinates. More precisely, these are the following:
\begin{itemize}
\item $v(j_1,j_2)$ such that $v(j_1,j_2)_1^{j_1}=1,v(j_1,j_2)_2^{j_2}=1$ and all other coordinates are equal to 0, where $1\le j_1,j_2\le m, j_1\neq j_2$.
\item $v(i;j_1,j_2,j_3)$ such that $v(i;j_1,j_2,j_3)_i^{j_1}=v(i;j_1,j_2,j_3)_i^{j_2}=v(i;j_1,j_2,j_3)_i^{j_3}=1$ and all other coordinates are equal to 0, where $1\le j_1<j_2<j_3\le m$, $i\in\{1,2\}$.
\end{itemize}
\end{Lemma}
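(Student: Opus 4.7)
The plan is to use the edge criterion for convex polytopes: $[0, v]$ is an edge of $P_m$ if and only if the midpoint $\tfrac{1}{2}v$ has no representation $\sum_i \lambda_i v'_i$ as a convex combination of vertices $v'_i$ other than $0$ and $v$. For one direction it therefore suffices to find two vertices $u, w \notin \{0, v\}$ with $u + w = v$, since then $\tfrac{1}{2}v = \tfrac{1}{2}(u+w)$ is a non-trivial convex combination. Since the vertices of $P_m$ are indicator vectors of tuples $(g_1,\ldots,g_m)\in \mathbb Z_3^m$ with $g_1+\cdots+g_m \equiv 0 \pmod 3$, the problem becomes purely combinatorial.

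For the first implication, let $v = v(g_1,\ldots,g_m)$ have support $S = \{j : g_j \ne 0\}$ with $|S| \ge 4$. Write $a = |\{j \in S : g_j = 1\}|$ and $b = |\{j \in S : g_j = 2\}|$; the sum-zero condition gives $a \equiv b \pmod 3$. If $a \ge 1$ and $b \ge 1$, take $T$ to consist of one index with entry $1$ and one with entry $2$, giving a proper non-empty subset $T \subsetneq S$ with $\sum_{j \in T} g_j \equiv 0 \pmod 3$. If instead $a = 0$ or $b = 0$, the congruence forces $|S|$ to be a positive multiple of $3$, hence $|S| \ge 6$, and any three indices of the sole type yield a proper zero-sum subset. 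Restricting $v$ to $T$ and to $S\setminus T$ then produces vertices $u, w \notin \{0,v\}$ of $P_m$ with $u + w = v$, so $[0,v]$ is not an edge. Since a singleton in $\{1,2\}$ cannot sum to zero in $\mathbb Z_3$, we conclude $|S| \in \{2, 3\}$ for every vertex connected to $0$ by an edge. The zero-sum condition on pairs in $\{1,2\}$ forces $\{g_{j_1}, g_{j_2}\} = \{1, 2\}$ (form $v(j_1, j_2)$), and on triples it forces all three entries equal (form $v(i; j_1, j_2, j_3)$).

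For the converse, we verify that each listed vertex $v$ admits only trivial midpoint decompositions. Suppose $\tfrac{1}{2}v = \sum_i \lambda_i v'_i$ with $\lambda_i > 0$ and $v'_i$ vertices; wherever a coordinate of $\tfrac{1}{2}v$ vanishes, non-negativity of the entries forces every $v'_i$ to vanish there as well. For $v = v(j_1, j_2)$, this constrains each $v'_i = v(g'_1,\ldots,g'_m)$ to satisfy $g'_j = 0$ for $j \ne j_1, j_2$, together with $g'_{j_1} \in \{0, 1\}$ and $g'_{j_2} \in \{0, 2\}$, and the sum-zero condition modulo $3$ then admits only $v'_i = 0$ and $v'_i = v$. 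The verification for $v(i; j_1, j_2, j_3)$ is analogous: the vanishing coordinates confine each $v'_i$ to have support inside $\{j_1, j_2, j_3\}$ with entries in $\{0, i\}$, and the sum-zero condition forces the number of non-zero entries to be $0$ or $3$, again giving $v'_i \in \{0, v\}$. The only real work in the argument is the elementary zero-sum case analysis in $\mathbb Z_3$ in the first half; the rest follows from clean inspection of the constraints at vanishing coordinates.
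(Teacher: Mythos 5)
Your proof is correct and, for the direction that actually matters, takes essentially the same route as the paper: any vertex whose support has size at least four splits as a sum of two nonzero vertices (the paper subtracts one of the listed vertices directly, you extract a proper zero-sum subset $T$ — the same decomposition in different clothing), so it cannot be edge-adjacent to $0$, and the sum-zero condition pins down the supports of size $2$ and $3$. You additionally verify, via the midpoint criterion, that the listed vertices really are joined to $0$ by an edge; the paper explicitly leaves that converse to the reader because it is not needed later, so your write-up is strictly more complete on this point.
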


\begin{proof}
Consider any other vertex $v$ of $P_m$. If we can write $v=w_1+w_2$ for some vertices $w_1,w_2$ then there can not be an edge between 0 and $v$.
Clearly, there must either exist two indices $j_1,j_2$ such that $v_1^{j_1}=1,v_2^{j_2}=1$ or three indices $j_1,j_2,j_3$ and $i$ such that $v_i^{j_1}=v_i^{j_2}=v_i^{j_3}=1$. In the first case we can write $v=v(j_1,j_2)+w$ and in the second $v=v(i;j_1,j_2,j_3)+w$ for some vertex $w$.
Thus, there are no other vertices connected by an edge with 0.

It is not difficult to show that the vertices from the statement are, in fact, connected by an edge with 0 but we do not need this fact so we leave the proof for the reader.
\end{proof}

The vertex description for polytopes representing group-based models is already known. On the contrary, the facet description is known only in few cases, namely in case of $G=\mathbb Z_2$ and $G=\mathbb Z_2\times \mathbb Z_2$, see \cite{h-rep}. In general, it is a difficult problem to obtain the facet description from the vertex description. In the next result, we are presenting the facet description of the polytope $P_m$ for the group $\mathbb Z_3$.

\begin{Theorem}\label{facetsZ3}
The facet description of the polytope $P_m$ is given by: $$x_i^j\ge 0, \text{ for } 1\le i\le 2,1\le j\le m,$$
$$x_1^j+x_2^j\le 1, \text{ for } 1\le j\le m,$$
$$\langle u_{a_1},x^1\rangle+\langle u_{a_2},x^2\rangle+\dots+\langle u_{a_m},x^m\rangle\ge 2-a_1-a_2-\dots-a_m,$$
$$\langle w_{a_1},x^1\rangle+\langle w_{a_2},x^2\rangle+\dots+\langle w_{a_m},x^m\rangle\ge 2-a_1-a_2-\dots-a_m,$$
for all $(a_1,a_2,\dots,a_m)\in \{0,1,2\}^m$ such that $a_1+a_2+\dots+a_m\equiv 2\pmod 3$, where $u_0=(1,2),u_1=(1,-1),u_2=(-2,-1),w_0=(2,1),w_1=(-1,1),w_2=(-1,-2)$ and $x^j=(x_1^j,x_2^j).$
\end{Theorem}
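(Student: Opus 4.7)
I proceed in three stages: (1) verify that each listed inequality holds at every vertex of $P_m$, (2) show each defines a facet, and (3) prove these exhaust all facets of $P_m$.

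The non-negativity constraints $x_i^j \ge 0$ and the upper bounds $x_1^j + x_2^j \le 1$ are immediate since at any vertex $v(g_1,\dots,g_m)$ the pair $(x_1^j, x_2^j)$ equals $(0,0)$, $(1,0)$, or $(0,1)$. For validity of the $u$-type, the key observation is that at such a vertex $\langle u_{a_j}, x^j\rangle + a_j$ equals the representative in $\{0,1,2\}$ of $(g_j + a_j) \bmod 3$; this can be checked directly from $u_0=(1,2)$, $u_1=(1,-1)$, $u_2=(-2,-1)$. The inequality then becomes
\[
\sum_{j=1}^m \bigl((g_j+a_j) \bmod 3\bigr) \ge 2,
\]
whose left-hand side is a sum of nonnegative integers congruent to $\sum g_j + \sum a_j \equiv 0 + 2 \equiv 2 \pmod 3$ and therefore at least $2$. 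The $w$-type inequalities follow by the $\Aut(\mathbb{Z}_3)$ symmetry $g \mapsto -g$, which sends $u_a$ to $w_a$.

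For the facet-defining property, I exploit the symmetry group generated by the $H_m$, $\mathbb{S}_m$, and $\Aut(\mathbb{Z}_3)$ actions. These partition the listed inequalities into two orbits, namely the $3m$ non-negativity-type inequalities and the $2\cdot 3^{m-1}$ combined $u$- and $w$-type inequalities, so it suffices to handle one representative of each. For the representative $x_1^1 \ge 0$, the tight vertices are the $v(g_1,\dots,g_m)$ with $g_1 \in \{0,2\}$, and after fixing $g_1$ the remaining $2(m-1)$ coordinates trace a translate of the vertex set of $P_{m-1}$; combined with the two choices of $g_1$ one obtains a $(2m-1)$-dimensional affine span. For a representative $u$-type inequality, tightness occurs precisely when the shifted tuple $\bigl((g_j + a_j)\bmod 3\bigr)_j$ is supported on at most three indices with nonzero values summing to $2$; after the $H_m$-shift by $-a$ these are exactly the neighbors of the origin from Lemma \ref{vert0}, and a direct rank computation on their coordinate matrix shows that their affine hull has codimension one.

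The main obstacle is completeness. Let $Q \supseteq P_m$ denote the polytope cut out by the listed inequalities; the task is $Q = P_m$. My plan is induction on $m$, with base case $m=3$ verified by direct polytope enumeration. For the inductive step, I would argue that at any vertex $q$ of $Q$ some coordinate $x_i^j$ must lie in $\{0,1\}$: otherwise the $u$- and $w$-type inequalities tight at $q$ cannot isolate $q$ and one constructs a short segment through $q$ staying inside $Q$, contradicting that $q$ is a vertex. Once such a coordinate is extremal, $q$ lies on a facet of $Q$ that is combinatorially a polytope of the same type for smaller $m$, and the inductive hypothesis identifies $q$ with some $v(g_1,\dots,g_m)$ in $P_m$. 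The most delicate point is verifying that the mod-$3$ tightness pattern of active $u$- and $w$-type inequalities at $q$ forces the label tuple to satisfy $\sum g_j \equiv 0 \pmod 3$; this amounts to a careful analysis of the $H_m$-orbit structure of tight $a$-tuples, which I expect to be the bulk of the technical work.
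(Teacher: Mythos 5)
Your stages (1) and (2) are essentially fine: the observation that $\langle u_{a_j},x^j\rangle+a_j$ equals the representative in $\{0,1,2\}$ of $(g_j+a_j)\bmod 3$ gives a clean validity proof (the paper never isolates this), and the orbit reduction plus a rank computation on tight vertices parallels what the paper does for $x_1^1\ge 0$. Two small inaccuracies there: the vertices tight on a $u$-type inequality are not ``exactly the neighbors of the origin'' from Lemma \ref{vert0}, but only the $m+\binom{m}{2}$ of them (together with $0$) whose shifted tuples sum to $2$ as integers; and the shift by $-a$ is not an element of $H_m$, since $\sum a_j\equiv 2\not\equiv 0\pmod 3$. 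Neither is fatal.

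The genuine gap is in stage (3), completeness, which is the substance of the theorem. Two steps are missing or do not work. First, the claim that every vertex $q$ of $Q$ has some coordinate in $\{0,1\}$ is only asserted; the $u$- and $w$-type normals certainly span $\mathbb R^{2m}$, so ruling out a vertex of $Q$ cut out solely by them requires a concrete analysis of which of these $2\cdot 3^{m-1}$ inequalities can be simultaneously tight, and you give none. Second, and more seriously, the induction does not close: the face of $P_m$ on which $x_1^j=0$ is the convex hull of the $v(g_1,\dots,g_m)$ with $g_j\in\{0,2\}$, and the face on which $x_1^j+x_2^j=1$ is the hull of those with $g_j\neq 0$; neither is ``combinatorially a polytope of the same type for smaller $m$,'' because $\{0,2\}$ is not a subgroup of $\mathbb Z_3$ and the zero-sum condition still involves all $m$ edges. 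The inductive hypothesis therefore does not apply to these faces, and describing them is a problem of essentially the same difficulty as the original. (Only the codimension-two face $x_1^j=x_2^j=0$ is isomorphic to $P_{m-1}$, but your argument produces a single extremal coordinate.) The paper's route avoids this entirely: since $H_m$ acts transitively on vertices, it suffices to classify facets through $0$, and Lemma \ref{vert0} restricts the vertices adjacent to $0$ so severely that the possible facet normals $(A_j,B_j)$ are pinned down by a short linear-algebra argument on the equations $A_j=-B_m$, $B_j=-A_m$, $A_{j_1}+A_{j_2}=-A_m$, $B_{j_1}+B_{j_2}=-B_m$. I would replace your stage (3) by that argument rather than trying to repair the induction.
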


\begin{proof}
Since by acting with suitable $h\in H_m$ we can map any vertex of $P_m$ to 0 it is enough to describe all facets containing 0.

Firstly, we show that the inequality $x_1^1\ge 0$ gives us a facet of $P_m$ containing 0. This inequality holds for every vertex of $P_m$ and therefore for every point in $P_m$. On the other hand, for vertices $0,v(2,3),v(3,2)$, $v(j,1)$ for $2\le j\le m $ and $v(2;1,2,j)$ for $3\le j\le m$ equality $x_1^1=0$ holds. These $2m$ vertices span a $2m-1$-dimensional affine subspace. We conclude that $x_1^1\ge 0$ gives us a facet of $P_m$. Analogously, we can prove that $x_i^j\ge 0$ defines a facet for all $i\in\{1,2\},1\le j\le m$.

Consider the facet $F$ containing 0 given by the inequality $$A_1x_1^1+B_1x_2^1+A_2x_1^2+B_2x_2^2+\dots+A_mx_1^m+B_mx_2^m\ge 0$$
for $A_j,B_j\in\mathbb R$. Since $P_m$ is a lattice polytope we may assume that all $A_j, B_j$ are integers.

Suppose that all $A_j,B_j\ge 0$. If only one of them is non-zero we obtain the facet $x_i^j\ge 0$.  If two or more coefficients are non-zero the inequality defines the intersection of such facets and therefore it is not a facet. Thus, there are no other facets for which all $A_j,B_j\ge 0$.

We may assume that at least one coefficient is negative. Without loss of generality $A_m<0$. If not, we can just act with suitable $\sigma\in\mathbb S_m$ and $\varphi \in \Aut(\mathbb Z_3)$ to get to this case. We claim that there exists a vertex $v\in F$ with $v_1^m=1$.
 Indeed, if there was no such vertex it would imply that $F$ is a subset of the facet of $P_m$ given by $x_1^m\ge 0$, which is impossible.

By definition of $P_m$, to the vertex $v$ of $P_m$ corresponds one element $h_v\in H_m$. If we act with $h_v\in H_m$ on the facet $F$ we get a facet of $P_m$ containing 0 since it maps $v$ to 0. Last two coefficients defining $h_v(F)$ are $(-B_m,A_m-B_m)$. If $B_m>0$, then these coefficients for $h_v(F)$ are both non-positive. Thus, we may also assume that $B_m\le 0$. Without loss of generality we may assume $A_m\le B_m$, by acting with suitable $\varphi\in\Aut(\mathbb Z_3)$.

Since vertices $v(m,j)$ and $v(j,m)$ satisfy the inequality we get $B_j+A_m\ge 0$ and $A_j+B_m\ge 0$. In particular, $A_j\ge 0,B_j>0$ for all $1\le j\le m-1$. Moreover, from vertices $v(1;j_1,j_2,m)$ we get that at most one coefficient $A_j$ can be equal to 0.

Since $F$ is a facet containing 0, there must exist $2m-1$ linearly independent non-zero vertices connected to 0 by an edge lying on $F$. It is easy to check that the only possible candidates for that are vertices $v(j,m)$ and $v(i;j_1,j_2,m)$ by Lemma \ref{vert0}.

That means that $2m-1$ linearly independent equations from the following must hold:
\[
A_j=-B_m, \tag{$\ast_j$}
\]
\[
A_{j_1}+A_{j_2}=-A_m, \tag{$\ast_{j_1,j_2}$}
\]
\[
B_j=-A_m, \tag{$\diamond_j$}
\]
\[
B_{j_1}+B_{j_2}=-B_m, \tag{$\diamond_{j_1,j_2}$}
\]
for $1\le j,j_1,j_2\le m-1$.

 Since in equations $(\ast_j)$ and $(\ast_{j_1,j_2})$ appear only $m+1$ coefficients we can choose at most $m$ linearly independent from them. The same is true for $(\diamond_j)$ and $(\diamond_{j_1,j_2})$. So we have to choose $m$ equations from $(\ast)$ equations and $m-1$ from $(\diamond)$ or the other way around.

 Vertices $v(m,j)$ satisfy the inequality of the facet $F$ and therefore $B_j\ge-A_m$ which leads to $B_{j_1}+B_{j_2}\ge-2A_m>-A_m\ge-B_m$. Thus, none of the equalities $(\diamond_{j_1,j_2})$ can hold. We conclude that all $m-1$ equalities $(\diamond_j)$ hold and $B_j=-A_m$ for all $1\le j\le m-1$.

 Suppose that $A_m\neq 2B_m$. From the $(\ast)$ equalities we need to choose $m$ linearly independent and therefore at least one from $(\ast)_j$ since others do not contain the coefficient $B_m$. Without loss of generality we choose $(\ast)_1,(\ast)_2,\dots,(\ast)_k$, $1\le k\le m-1$. Thus $A_1=A_2=\dots = A_k=-B_m$. The equalities $(\ast_{j_1,j_2})$ for $j_1,j_2\le k$ can not hold since $A_m\neq 2B_m$. From the equalities $(\ast_{j_1,j_2})$ for $j_1,j_2>k$ we can choose at most $m-k-1$ linearly independent so we must choose at least one with $j_1\le k,j_2>k$. For the fixed $j_2>k$ all equalities $(\ast_{j_1,j_2})$ are the same. Hence if we choose more than one of them we need to choose different $j_2$. Without loss of generality we choose $(\ast_{1,j_2})$ for all $k+1\le j_2\le l$, where $k<l\le m-1$. We are left with the equalities $(\ast_{j_1,j_2})$ for $j_1,j_2>l$. However, we can choose at most $m-k-l-1$ linearly independent which means we have together only at most $k+l+m-k-l-1=m-1$ equalities which is a contradiction. Therefore, we obtain that $A_m=2B_m$.

 In this case all equalities $(\ast)$ have common solution $A_1=A_2=\dots =A_{m-1}=-B_m=-A_m/2$ so it does not matter which of them we choose.

 Thus, we have only one solution. After dividing by $A_1$ we obtain the facet given by
 $$x_1^1+2x_2^1+x_1^2+2x_2^2+\dots+x_1^{m-1}+2x_2^{m-1}-2x_1^m-x_2^m\ge 0.$$

 All the other facets of $P_m$ can be obtained by acting with the groups $\mathbb S_m,H_m$ and $\Aut(\mathbb Z_3)$. It is easy to check that we get exactly the facets from the statement of our theorem.
\end{proof}

With the facet description of the polytope $P_m$ we are able to prove the following theorem about normality:

\begin{Theorem}\label{normalz3}
The Polytope $P_m$ is normal for every positive integer $m$.
\end{Theorem}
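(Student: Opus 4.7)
The plan is to prove normality by induction on $k$: given a lattice point $p \in kP_m$, I would locate a vertex $v$ of $P_m$ with $p - v$ a lattice point of $(k-1)P_m$, so that iterating the process yields a decomposition $p = v_1 + \cdots + v_k$. The base case $k = 1$ is immediate: the inequalities $x_i^j \geq 0$ and $x_1^j + x_2^j \leq 1$ from Theorem~\ref{facetsZ3} together with the modular lattice constraint $\sum_j (x_1^j - x_2^j) \equiv 0 \pmod 3$ force every lattice point of $P_m$ to be a $\{0,1\}$-vector with at most one nonzero coordinate per edge-block whose selected positions give a tuple in $\mathbb Z_3^m$ summing to $0$, i.e., a vertex of $P_m$.

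For $k \geq 2$, I would work with the $G$-presentation $p = x(G_1, \ldots, G_m)$ with $|G_j| = k$ and $\sum_j T_j \equiv 0 \pmod 3$ (where $T_j$ denotes the sum of $G_j$ as an integer). A vertex $v = v(g_1, \ldots, g_m)$ of $P_m$ with $g_j \in G_j$ and $\sum_j g_j \equiv 0 \pmod 3$ automatically satisfies the non-negativity and box constraints $x_1^j + x_2^j \leq k-1$ of $(k-1)P_m$. The remaining $u$-facet constraints translate, via the identity $\langle u_a, v^j \rangle = ((g_j + a) \bmod 3) - a$, into the inequalities
\[
\sum_{j=1}^{m} ((g_j + a_j) \bmod 3) \;\leq\; \sum_{j=1}^{m} S_j(a_j) \;-\; 2(k-1)
\]
for all $(a_1, \ldots, a_m) \in \{0,1,2\}^m$ with $\sum_j a_j \equiv 2 \pmod 3$, where $S_j(a) = \sum_{g \in G_j}((g + a) \bmod 3)$; the $w$-facet conditions give the analogous inequalities after the substitution $g \mapsto -g$.

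Call an index $(a_1, \ldots, a_m)$ \emph{tight} for $p$ when $\sum_j S_j(a_j) = 2k$. A tight index forces $\sum_j ((g_j + a_j) \bmod 3) = 2$, meaning $v$ must lie on the corresponding facet of $P_m$; non-tight indices have slack in positive multiples of $3$ and permit some flexibility. The inductive step thereby reduces to the combinatorial claim that a vertex $v$ with $g_j \in G_j$ and $\sum_j g_j \equiv 0 \pmod 3$ lying on every tight facet of $p$ always exists. To establish this, I would exploit the symmetries of $P_m$ under the actions of $H_m$, $\mathbb S_m$, and $\Aut(\mathbb Z_3)$ to reduce $(G_1, \ldots, G_m)$ to a canonical configuration, and then construct $v$ explicitly from the arithmetic of the tight-index data modulo $3$.

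The main obstacle will be the simultaneous satisfaction of many tight-facet equalities together with the multiset constraints $g_j \in G_j$ when $p$ lies on a low-dimensional face of $kP_m$. I expect to address this by a secondary induction on the number of linearly independent tight facets: the generic case (no tight facets) reduces to finding any tuple $(g_1, \ldots, g_m) \in G_1 \times \cdots \times G_m$ with $\sum_j g_j \equiv 0 \pmod 3$, whose existence is guaranteed by the non-trivial facet inequalities on $p$; the boundary cases reduce to normality of a proper face of $P_m$, itself a polytope of analogous structure to which the induction hypothesis can be reapplied.
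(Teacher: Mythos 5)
Your overall strategy --- induct on $k$ by finding a vertex $v$ with $x-v\in (k-1)P_m\cap L_m$ --- is exactly the paper's, and your reduction of the inductive step to ``find $(g_1,\dots,g_m)\in G_1\times\cdots\times G_m$ with $\sum_j g_j=0$ lying on every tight facet'' correctly identifies part of the crux: tightness does force $S_i(v,A)=2$, and slack on non-tight facets does come in multiples of $3$ because $S_i(x,A)\equiv 2k\pmod 3$. But the proposal stops precisely where the work begins, and the two devices you offer both have genuine gaps. The first gap is quantitative: a non-tight facet only tolerates a drop $S_i(x,A)-S_i(x-v,A)$ of at most $\text{slack}+2$, i.e.\ at most $5$ when the slack is exactly $3$, whereas for a \emph{general} vertex $v$ the drop on a facet indexed by $A$ can be as large as $2m$ (each block contributes up to $2$). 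So even your ``generic case'' of picking an arbitrary admissible tuple fails: you must choose $v$ very close to $v(0)$ to keep the drops small. The paper does this by restricting to the explicit candidate set $V_m=\{v(0),v(j,m),v(m,j)\}$, for which Lemma~\ref{small} bounds the drop by $5$ on the low-weight facets; and for $m$-tuples $A$ with $a_1+\cdots+a_m$ large, where no such bound is available, it proves the target inequality directly for any $x$-good vertex (Proposition~\ref{big}), with a separate borderline analysis at weight $5$. Your tight/non-tight framework, as stated, does not see either of these issues.

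The second gap is the claim that boundary cases ``reduce to normality of a proper face of $P_m$, itself a polytope of analogous structure.'' The facets $F_i(A)$ of $P_m$ are not polytopes of the form $P_{m'}$, nor obviously group-based polytopes at all, so there is no induction hypothesis to reapply to them; and even if a face $F$ were normal with respect to the lattice generated by its own vertices, a point of $kF\cap L_m$ need not lie in that sublattice, so normality of the face would not by itself yield the required decomposition into vertices. The paper avoids any recursion into faces: it normalizes $x$ by the $H_m$, $\mathbb S_m$, $\Aut(\mathbb Z_3)$ symmetries to satisfy conditions $(1)$--$(4)$, and then shows through a case analysis (Propositions~\ref{A5} through \ref{mostspecific}) that this normalization prevents $x$ from lying on too many tight facets at once, so that one of the finitely many candidates in $V_m$ always works. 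Some argument of this concreteness appears unavoidable; as written, your proposal is a plausible plan rather than a proof.
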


The last section of this paper, Section \ref{section_z3normal}, is dedicated to the proof of Theorem \ref{normalz3}.

\section{The Toric Fiber product} \label{section_fiberproduct}

We start this section by defining {\it toric fiber products}. This concept was introduced by Sullivant, in \cite{seth}. Given a positive integer $m$, we denote $[m]= \{1,\dots, m\}$. Let $r$ be a positive integer and let $s$ and $t$ be two vectors of positive integers in $\ZZ^{r}_{>0}$. We consider the following homogeneous, multigraded polynomial rings
\[
\KK[x]=\KK[x_j^i: i\in [r], j\in [s_i]] \text{ and }\\
\KK[y]=\KK[y_k^i: i\in [r], k\in [t_i]],
\]
with the same multigrading

\[
\deg(x_j^i)=\deg(y_k^i)=\mathbf{a}^i \in \ZZ^d.
\]
We assume that there exists a vector $w\in \QQ^d$ such that $\langle w, \mathbf{a}^i\rangle =1$, for any $i$. Denote $\mathcal{A}=\{\mathbf{a}^1, \dots, \mathbf{a}^d\}$. If $I\subseteq \KK[x]$ and $J\subseteq \KK[y]$ are homogeneous ideals, then the quotients rings $R=\KK[x]/I$ and $S=\KK[y]/J$ are also multigraded rings. Let
\[
\KK[z]=\KK[z_{jk}^i: i\in [r], j \in [s_i], k\in [t_i]],
\]
and consider the ring homomorphism
\[
\phi_{I,J} : \KK[z] \rightarrow R \otimes_{\KK} S; \quad
z_{jk}^{i} \longmapsto x_j^i \otimes y_k^i.
\]
\begin{Definition}
The toric fiber product of $I$ and $J$ is
\[
I\times_{\mathcal{A}} J= \Ker(\phi_{I,J}).
\]
\end{Definition}
\begin{Lemma}\label{facets}
Let $P_1$ and $P_2$ be two polytopes. Let $\pi_i:P_i\rightarrow\mathbb R^n, (i=1,2) $ be integral projections such that $\pi_1(P_1)=\pi_2(P_2)=\Delta_n$, where $\Delta_n$ is the standard simplex. Then all facets of the toric fiber product $P_1\times_{\Delta_n}P_2$ are of the form $F_1\times_{\Delta_n}P_2$ or $P_1\times_{\Delta_n}F_2$ where $F_i$ is a facet of $P_i$.
\end{Lemma}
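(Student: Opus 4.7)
The plan is to realize $P := P_1 \times_{\Delta_n} P_2$ as the intersection $(P_1 \times P_2) \cap L$ inside the product of the ambient spaces of $P_1$ and $P_2$, where $L$ is the linear subspace cut out by $\pi_1(x) = \pi_2(y)$. The inclusion $\subseteq$ is immediate from the vertex description. For the reverse inclusion one uses a transportation-polytope argument: given $(x,y)\in P_1\times P_2$ with $\pi_1(x)=\pi_2(y)=\sum_i t_i e_i$, express $x$ and $y$ as convex combinations of vertices of $P_1,P_2$, and within the set of vertex-pairs $(v,w)$ with $\pi_1(v)=\pi_2(w)=e_i$ solve the classical transportation problem with the induced marginals (feasible because on both sides the marginals have total mass $t_i$). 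Assembling these fibrewise solutions expresses $(x,y)$ as a convex combination of matching vertex pairs. Under this identification the coordinate projections $p_i:P\to P_i$ are surjective linear maps and $\dim P=\dim P_1+\dim P_2-n$.

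For the easy direction, if $F_1$ is a facet of $P_1$, the fiber-product dimension formula gives $\dim p_1^{-1}(F_1)=\dim F_1+\dim P_2-n=\dim P-1$, so $F_1\times_{\Delta_n}P_2:=p_1^{-1}(F_1)$ is a facet of $P$; the case of $P_1\times_{\Delta_n}F_2$ is symmetric.

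For the converse, let $F^*$ be an arbitrary facet of $P$ cut out by $\ell\geq c$. Extend $\ell$ affinely to the ambient space and decompose $\ell(x,y)=\ell_1(x)+\ell_2(y)$. This decomposition is well-defined only up to the gauge $\ell_1\mapsto\ell_1+\phi\circ\pi_1,\ \ell_2\mapsto\ell_2-\phi\circ\pi_2$ for affine $\phi$ on $\mathbb R^n$, since the corresponding change vanishes on $L$. Let $a_i$ (resp.\ $b_i$) denote the minimum of $\ell_1$ (resp.\ $\ell_2$) over the vertices of $P_1$ (resp.\ $P_2$) projecting to $e_i$. One checks $a_i+b_i\geq c$ for every $i$ with equality for at least one. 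The crucial step is to choose the gauge so that $\phi(e_i):=c-a_i$: after this adjustment, $\ell_1\geq c$ on $P_1$ with the minimum value $c$ attained in every vertex-fiber over $\Delta_n$, while $\ell_2\geq 0$ on $P_2$.

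Set $F_1:=\{\ell_1=c\}\subseteq P_1$ and $F_2:=\{\ell_2=0\}\subseteq P_2$; these are faces satisfying $F^*=p_1^{-1}(F_1)\cap p_2^{-1}(F_2)$. Since the minimum of $\ell_1$ is attained in every vertex-fiber, $F_1$ meets every vertex-fiber of $\pi_1$, and hence $\pi_1(F_1)=\Delta_n$. The fiber-product dimension formula then yields $\dim F^*=\dim F_1+\dim F_2-n$. Combined with $\dim F^*=\dim P-1$ this forces $(\dim P_1-\dim F_1)+(\dim P_2-\dim F_2)=1$, so exactly one of $F_1,F_2$ equals its ambient polytope while the other is a facet of it. This produces $F^*=P_1\times_{\Delta_n}F_2$ or $F^*=F_1\times_{\Delta_n}P_2$ as required. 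The main technical obstacle is the gauge choice of $\phi$, which is what converts the otherwise-subtle facet classification into a clean codimension count.
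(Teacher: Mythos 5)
Your gauge construction is in essence the same move the paper makes: the paper replaces the defining functional $u$ of a face by $v_A=u_A+\sum_k(m_k-u_k)e_k$, where $m_k$ is the minimum of the $P_2$-part of $u$ over the vertex fiber of $e_k$ --- exactly your fiber-wise re-gauging by $\phi$. The difference is in how the conclusion is drawn: the paper only uses the re-gauged functional to produce a \emph{good} face containing the given face (one cut out by a functional supported on the $P_1$-coordinates), and then finishes by maximality of facets; you instead keep both halves $F_1=\{\ell_1=c\}$, $F_2=\{\ell_2=0\}$ and run a codimension count.

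That last step has a genuine gap. The identity $\dim(F_1\times_{\Delta_n}F_2)=\dim F_1+\dim F_2-n$ is not a formal consequence of $\pi_1(F_1)=\Delta_n$ alone; it also needs $\pi_2(F_2)=\Delta_n$, which your construction does not guarantee. Concretely, $F_2$ meets the vertex fiber over $e_i$ only when $a_i+b_i=c$, and nothing forces this for every $i$. And the formula really can fail when one projection is not surjective: take $n=1$, $Q_1=\Conv\{(0,0,0),(1,0,0),(1,1,0),(1,0,1),(1,1,1)\}$ with $\pi_1=x_1$ (fiber over $0$ a point, fiber over $1$ a square) and $F_2=\{0\}\subset[0,1]$; then $Q_1\times_{[0,1]}F_2$ is a point, not of dimension $3+0-1$. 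So in the case where some $a_i+b_i>c$ your codimension count, and hence the dichotomy ``one of $F_1,F_2$ is the whole polytope,'' is unjustified. The case is repairable: if $F_2$ misses the fiber over $e_{i_0}$, then every vertex of $F_2$ projects into the facet $\Delta'=\Conv\{e_i:i\neq i_0\}$, so $F^*\subseteq\pi^{-1}(\Delta')\cap P$, a proper face of $P$; since $F^*$ is a facet it equals this face, which is $G_1\times_{\Delta_n}P_2$ for the face $G_1=P_1\cap\pi_1^{-1}(\Delta')$, and one then argues (as the paper does in its last paragraph) that $G_1$ must be a facet of $P_1$. You need to add this branch, or else prove $\pi_2(F_2)=\Delta_n$ in your setting, before the dimension count is legitimate. (A smaller remark: your ``easy direction'' --- that every facet of $P_1$ yields a facet of $P$ --- relies on the same formula, is false for facets of $P_1$ lying over a proper face of $\Delta_n$, and is not actually asserted by the lemma, so it should simply be dropped.)
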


\begin{proof}
Let $d_1,d_2$ be the dimensions of $P_1,P_2$, respectively and $d=d_1+d_2-n$ be the dimension of $P=P_1\times_{\Delta_n}P_2$.
  For $(x_i)_{i\in [d]}=x\in \mathbb{R}^d$ and subset $S\subseteq[d]$ we denote by $x_S$ the projection of $x$ to $\mathbb R^{S}$. Let $A, B\subseteq [d]$ be sets with $|A|=d_1,|B|=d_2,|A\cap B|=n$ such that $P_1\subseteq\mathbb R^A$, $P_2\subseteq \mathbb R^B$, $P\subseteq \mathbb R^{A\cup B}$ and $\Delta_n\subseteq \mathbb R^{A\cap B}$.

  For a face $F$ of $P$, we say that $F$ is \emph{good} if $F$ is of the form $F_1\times_{\Delta_n}P_2$ or $P_1\times_{\Delta_n}F_2$ where $F_i$ is a face of $P_i$.

  Let $F$ be a face of $P$ given by $u\in \mathbb Z^d$, i.e. $\langle x,u\rangle \ge a$ for any $x\in P$ and $F=\{x\in P: \langle x,u\rangle=a\}$.

We prove that if $F$ is not good, then there exists a face $\widetilde{F}\supseteq F$ such that $\widetilde{F}$ is good. This implies that there is no facet of $P$ which is not good.

For this we will find $0\neq v\in \mathbb Z^d$ such that, for any vertex $x\in P$, $\langle x,v\rangle \ge a$ and $\langle x,u\rangle=a$ implies $\langle x,v\rangle=a$ and $v_{B\setminus A}=0$. Then the face given by $v$ and $a$ satisfies all conditions.

 Denote $$m_k=\min_{x\in P,x_{A\cap B}=e_k}\langle x_B,u_B \rangle.$$ It exists because we are taking minimum in the compact set. Take $v$ such that $$v_A=u_A+\sum_{k\in A\cap B}(m_k-u_k)e_k,\ v_{B\setminus A}=0.$$ Then for any vertex $x\in P$ such that $x_{A\cap B}=e_k$ we have
 $$\langle x,v\rangle=\langle x_A,v_A\rangle=\langle x_A,u_A\rangle+m_k-u_k\ge a, $$
 since for all $x$ with $x_{A\cap B}=e_k$ we have $\langle x,u\rangle=\langle x_A,u_A\rangle+\langle x_B,u_B\rangle-u_k\ge a $. The equality statement follows in the same way. Since $F$ was not in the form from the statement we must have $u_{A\setminus B}\neq 0$ which means that $0\neq v$ and therefore this is the $v$ which we were looking for.

 Now let $F$ be a facet of $P$. We know that $F$ is be good. Without loss of generality $F=F_1\times_{\Delta_n}P_2$. But if $F_1$ is not a facet of $P_1$ then there exists a facet $\widetilde{F_1}\supsetneq F_1$. Then $F_1\times_{\Delta_n}P_2\subsetneq \widetilde{F_1}\times_{\Delta_n}P_2$ which means that $F$ is not a facet of $P$, which is a contradiction.
\end{proof}

We give now the main result of this section.
\begin{Theorem}\label{fibGor}
Let  $P_1, P_2$ be two Gorenstein polytopes with the same Gorenstein index $k$.  Let $\pi_i:P_i\rightarrow\mathbb R^n, (i=1,2) $ be projections such that $\pi_1(P_1)=\pi_2(P_2)=\Delta_n$, where $\Delta_n$ is the standard simplex. Let $p_1,p_2$ be the unique interior lattice points of $kP_1$ and $kP_2$ respectively. Assume that $\pi_1(p_1)=\pi_2(p_2)$. Then the toric fiber product $P=P_1\times_{\Delta_n}P_2$ is also Gorenstein of index $k$. Moreover, the unique interior point $p$ of $kP$ satisfies $\varphi_1(p)=p_1,\varphi_2(p)=p_2$ where $\varphi_i:P\rightarrow P_i$ are toric fiber product maps.
\end{Theorem}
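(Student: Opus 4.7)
The plan is to produce the interior lattice point $p$ of $kP$ directly in terms of $p_1$ and $p_2$, then verify both the Gorenstein distance condition at every facet (equivalently, that $kP-p$ is reflexive) and the uniqueness of $p$. Using the set-up of Lemma \ref{facets}, write $P_1\subseteq\mathbb R^A$, $P_2\subseteq\mathbb R^B$, $P\subseteq\mathbb R^{A\cup B}$, $\Delta_n\subseteq\mathbb R^{A\cap B}$, and let $\varphi_1,\varphi_2$ be the restriction maps $x\mapsto x|_A$ and $x\mapsto x|_B$. I would first define $p\in\mathbb R^{A\cup B}$ by $p|_A=p_1$ and $p|_B=p_2$; the hypothesis $\pi_1(p_1)=\pi_2(p_2)$ is exactly the statement that these two definitions agree on $A\cap B$, so $p$ is a well-defined lattice point in the fiber-product lattice and $p\in kP$ by construction.

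Second, Lemma \ref{facets} tells us every facet of $P$ has the form $F_1\times_{\Delta_n}P_2$ or $P_1\times_{\Delta_n}F_2$ for a facet $F_i$ of $P_i$. For a facet of the first type, if $F_1$ is cut out in $P_1$ by $\langle u,y\rangle\ge c$ with $u$ primitive, then the corresponding facet of $P$ is cut out by $\langle\tilde u,x\rangle\ge c$, where $\tilde u$ is the zero-extension of $u$ to $\mathbb R^{A\cup B}$, and $\tilde u$ remains primitive in the fiber-product lattice. For $kP$ the inequality scales to $\langle\tilde u,x\rangle\ge kc$, and we compute $\langle\tilde u,p\rangle=\langle u,p_1\rangle=kc+1$ using the Gorenstein-of-index-$k$ property of $P_1$. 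The identical argument applies to facets of the form $P_1\times_{\Delta_n}F_2$. This simultaneously shows that $p$ strictly satisfies every facet inequality of $kP$ (so lies in its interior), and that at every facet the lattice distance from $p$ equals $1$, i.e.\ the Gorenstein reflexivity condition holds.

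For uniqueness, let $p'$ be any interior lattice point of $kP$. The facet identification from Lemma \ref{facets} shows that $\varphi_1(p')$ strictly satisfies every facet inequality of $kP_1$, and it is a lattice point of $kP_1$ because $\varphi_1$ is integral. Hence $\varphi_1(p')$ is an interior lattice point of $kP_1$, and the Gorenstein assumption on $P_1$ forces $\varphi_1(p')=p_1$; symmetrically $\varphi_2(p')=p_2$, so $p'=p$. This gives the final ``moreover'' clause as well.

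The main technical hurdles are lattice-theoretic rather than combinatorial. First, one must check that the zero-extension $\tilde u$ of a primitive facet normal of $P_i$ is primitive with respect to the fiber-product lattice, and dually that $\varphi_i$ sends the lattice of $P$ onto the lattice of $P_i$; both are standard consequences of the fiber product being taken over the unimodular simplex $\Delta_n$. Second, since the paper's definition of Gorenstein polytope builds in normality, one has to note that $P$ is normal; this will be handled by the known preservation of normality for toric fiber products over simplices in the setting of \cite{seth} (and is in fact the reason the hypothesis $\pi_1(P_1)=\pi_2(P_2)=\Delta_n$ appears), so no additional argument is required beyond invoking it.
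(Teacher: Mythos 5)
Your proof is correct and follows essentially the same route as the paper: Lemma \ref{facets} reduces everything to facets of the form $F_1\times_{\Delta_n}P_2$ or $P_1\times_{\Delta_n}F_2$, and the lattice distance from the glued point $p$ to such a facet equals the distance from $p_i$ to $F_i$, which is $1$ by the Gorenstein hypothesis. The paper's own proof is in fact terser than yours --- it omits the explicit construction of $p$, the uniqueness argument, the primitivity of the lifted facet normals, and the normality of $P$, all of which you rightly identify as the points requiring care.
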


\begin{proof}
We show that in the dilated polytope $kP$, the point $p$ has distance $1$ to all facets. Let $F$ be a facet of $P$. By Lemma \ref{facets}, without loss of generality, $F=~F_1\times_{\Delta_n}~P_2$. It is clear that the point $p$ has the distance to $F$ the same as the distance of the point $p_1$ to $F_1$, which is $1$, by our assumption on $P_1$ being Gorenstein.
\end{proof}

\section{Gorenstein property for claw trees and small groups} \label{section_gorenstein}

In this section we prove our main result, Gorenstein property for polytopes associated to group $\mathbb Z_3$ or $\mathbb Z_2\times\mathbb Z_2$ and any trivalent tree.

Notice, that the polytope ${P_{G,m}}$ has a symmetry that can be described by group action of $H_{G,m}=\{(h_1, h_2,\dots, h_m)\in G^m: h_1+h_2+\dots+h_m=0\}$ as in the case of $\mathbb Z_3$.

\begin{Theorem}\label{Gore}
Let $P_{G,m}$ be a polytope associated to the $m$-claw tree ($m\ge 3)$ and the group $G\in\{\mathbb Z_2,\mathbb Z_3,\mathbb Z_2\times \mathbb Z_2\}$. Then the polytope $P_{G,m}$ is Gorenstein if and only if $(G,m)\in\{(\mathbb Z_2,3),(\mathbb Z_2,4),(\mathbb Z_3,3),(\mathbb Z_2\times\mathbb Z_2,3)\}$. The Gorenstein indices in these cases are 4,2,3,4 respectively.
\end{Theorem}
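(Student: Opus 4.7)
The plan is a case analysis based directly on the Gorenstein criterion: $P_{G,m}$ is Gorenstein of index $k$ iff $kP_{G,m}$ has a unique interior lattice point $p$ at lattice distance $1$ from every facet. I would first reduce the search for interior points using the symmetry group of $P_{G,m}$, then treat the four positive cases and the remaining negative cases separately.

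The symmetry reduction runs as follows. The group $H_{G,m}=\{(h_1,\dots,h_m)\in G^m:\sum h_j=0\}$ acts on $P_{G,m}$ by $(hx)_g^j=x_{g+h_j}^j$, preserving the lattice and mapping vertices to vertices; combined with the edge-permutation action of $\mathbb S_m$ and the $\Aut(G)$-action, it acts transitively on the vertex set. Hence any unique interior lattice point of $kP_{G,m}$ must be fixed by the whole action. A direct check (working with the auxiliary coordinate $x_0^j=k-\sum_{g\neq 0}x_g^j$) shows that the only integer points simultaneously fixed by all these symmetries are of the form $p=c\mathbf 1$ with $c|G|=k$, and that lattice membership $p\in L_{G,m}$ holds automatically for each of the three groups in play.

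For the Gorenstein direction, $(\mathbb Z_2,3)$ is the theorem of Buczy\'nska and Wi\'sniewski. For $(\mathbb Z_3,3)$ I would take $p=(1,1,\dots,1)$ at $k=3$ and check distance $1$ against every facet supplied by Theorem \ref{facetsZ3}: the coordinate facets $x_i^j\ge 0$ give distance $1$; the simplex facets $x_1^j+x_2^j\le 1$ give distance $k-2=1$; and for the $u$- and $w$-type facets one plugs in $\langle u_0,(1,1)\rangle=3$, $\langle u_1,(1,1)\rangle=0$, $\langle u_2,(1,1)\rangle=-3$ (analogously for $w$) and uses $\sum a_j\equiv 2\pmod 3$ to collapse each inequality to slack exactly $1$. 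For $(\mathbb Z_2,4)$ the polytope is a lattice demihypercube and at $k=2$ the candidate $p=\mathbf 1$ has distance $1$ to every coordinate face and every even-parity face. For $(\mathbb Z_2\times\mathbb Z_2,3)$ I would invoke the H-representation from \cite{h-rep} and perform the analogous computation at $k=4$.

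For the non-Gorenstein direction it suffices, by the symmetry argument, to exhibit in each remaining case one facet whose lattice distance from the forced candidate $p=c\mathbf 1$ exceeds $1$. Writing $n=|G|$ and using a decomposition $G=\mathbb Z_n\times\Gamma$, I would work with the linear functional $f(x)=\sum_{j<m}\sum_{g\neq 0}i_g\,x_g^j-\sum_{g\neq 0}(n-i_g)x_g^m$, where $i_g$ is the $\mathbb Z_n$-component of $g$. Verifying that $f$ defines a facet requires exhibiting $\dim P_{G,m}$ affinely independent vertices with $f=0$, which can be arranged by a triangular basis argument using the vertices $v(j_1,j_2)$ and $v(i;j_1,j_2,j_3)$ together with their $H_{G,m}$-translates. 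Evaluating $f(p)$ yields a value proportional to $(m-2)(n-1)c$, which exceeds the lattice-distance threshold $1$ in precisely the cases outside the listed four; for dilations where $c\mathbf 1$ fails the lattice condition one adjusts to $p=2c\mathbf 1$ at level $2k$, for which the same inequality becomes even more badly violated. The main obstacle is the $(\mathbb Z_2\times\mathbb Z_2,3)$ check, since no facet description for this group is developed internally in the paper, forcing reliance on \cite{h-rep} and a larger facet orbit count than for $\mathbb Z_3$; a secondary technicality is confirming that $c\mathbf 1$ is really the only candidate at every relevant dilation, rather than merely the only symmetric one.
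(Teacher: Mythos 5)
Your overall architecture matches the paper's: invoke the normality results so that the ``unique interior lattice point at lattice distance $1$ from every facet'' criterion applies, use the $H_{G,m}$-symmetry to force any unique interior point of $kP_{G,m}$ to be $c\mathbf 1$ with $c|G|=k$ (with the $(2,\dots,2)$ adjustment for $\mathbb Z_2$ and $m$ odd), and then compute facet distances from the explicit H-representations (Theorem \ref{facetsZ3} for $\mathbb Z_3$, \cite{h-rep} for $\mathbb Z_2\times\mathbb Z_2$). Your positive-case computations are correct, and your handling of the candidate-uniqueness issue is sound (uniqueness forces symmetry-invariance, so there is nothing extra to confirm there). Do state explicitly that normality is needed before the interior-point criterion may be used; the paper cites Theorem \ref{normalz3} and \cite{martinko} for exactly this.

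The genuine gap is in your negative direction for $G=\mathbb Z_2\times\mathbb Z_2$. The functional $f(x)=\sum_{j<m}\sum_{g\neq 0}i_g\,x_g^j-\sum_{g\neq 0}(n-i_g)x_g^m$ is \emph{not} a valid inequality on $P_{G,m}$ when $G=\mathbb Z_n\times\Gamma$ has nonzero elements $g$ with $i_g=0$: for $G=\mathbb Z_2\times\mathbb Z_2$ the vertex with $G$-presentation $\bigl((0,0),\dots,(0,0),(0,1),(0,1)\bigr)$ gives $f=-(n-0)=-2<0$, so $f\ge 0$ does not define a face, let alone a facet, and the distance computation $f(p)\sim(m-2)(n-1)c$ proves nothing there. (For cyclic $G$ the functional is fine, since then $g_m\neq 0$ forces $i_{g_m}\ge 1$ and the congruence $f\equiv 0\pmod n$ together with $f\ge -(n-1)$ yields validity; so your $\mathbb Z_2$ and $\mathbb Z_3$ negative cases go through once you supply the affinely independent vertices certifying facetness.) For the cases $(\mathbb Z_2\times\mathbb Z_2,m)$ with $m\ge 4$ you must instead fall back on the H-representation from \cite{h-rep} that you already use for the positive check: plugging $\mathbf 1$ into a parity inequality $\sum_{j\in A}(x_0^j+x_\alpha^j)+\sum_{j\notin A}(x_\beta^j+x_\gamma^j)\ge k$ gives value $2m$ against $k=4$, and since the left-hand side is always even on the lattice the lattice distance is $(2m-4)/2=m-2$, which exceeds $1$ exactly when $m\ge 4$. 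This is precisely how the paper closes those cases, and it also replaces your facetness verification by an appeal to a known facet description.
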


\begin{proof}
In this cases the polytopes are normal by Theorem \ref{normalz3} and \cite[Theorem 14]{martinko}. Therefore our polytope is Gorenstein if and only if there exists some $k$ such that $kP_{G,m}$ has an unique interior lattice point whose lattice distance from all facets is 1, see \cite[Section 12.5]{monomialideals}.

 Suppose that there exists a unique interior lattice point $\omega_{G,m}\in kP_{G,m}$. We claim that all coordinates of $\omega_{G,m}$ must be equal. If not we can act with a suitable $h\in H_{G,m}$ to obtain a different interior lattice point. Therefore, in the case of $G\in\{\mathbb Z_3,\mathbb Z_2\times\mathbb Z_2\}$ or $m$ even, the only candidate for $\omega_{G,m}$ is $(1,1,\dots,1)$ since it belongs to $L_{G,m}$. In the case $G=\mathbb Z_2$ and $m$ odd, $(1,1,\dots,1)$ is not a lattice point in the lattice $L_{G,m}$, but the point $(2,2,\dots,2)$ is. Thus, in all cases we can denote by $\omega_{G,m}$ the only possible candidate.
We have the inequalities for $P_{G,m}$ from \cite{h-rep} and Theorem \ref{facetsZ3}. Now we compute the lattice distances from $\omega_{G,m}$ to the facets of $P_{G,m}$ and conclude our desired result. We analyze in detail the case of $\mathbb Z_2\times \mathbb Z_2$, which is the 3-Kimura model, and the other cases are similar.

Let us denote the elements of $\mathbb Z_2\times \mathbb Z_2=\{0,\alpha,\beta,\gamma\}$. The inequalities describing the $k$-dilation of the polytope are
\[
x_g^j\ge 0,\forall g\in \mathbb Z_2\times \mathbb Z_2,1\le j\le m, \tag{$\spadesuit_1$}
\]
For all $A\subseteq\{1,2,\dots,m\}$ of odd cardinality:
\[
\sum_{ j\in A}(x_0^j+x_\alpha^j)+\sum_{ j\not\in A}(x_\beta^j+x_\gamma^j)\ge k,
\]
\[
 \sum_{ j\in A}(x_0^j+x_\beta^j)+\sum_{ j\not\in A}(x_\alpha^j+x_\gamma^j)\ge k, \tag{$\spadesuit_2$}
\]
\[
 \sum_{ j\in A}(x_0^j+x_\gamma^j)+\sum_{ j\not\in A}(x_\alpha^j+x_\beta^j)\ge k.
 \]

For the last three inequalities we know that the difference of left and right side is always even if $x\in L_{G,m}$ \cite[Lemma 2]{martinko}. This means that for lattice point $x$ its distance from the facet is the difference of both sides divided by 2. In other words this facet is given by the point $y\in L_{G,m}^\vee$ where $y_0^j=y_\alpha^j=1/2,y_\beta^j=y_\gamma^j=0$ for $j\in A$ and $y_0^j=y_\alpha^j=0,y_\beta^j=y_\gamma^j=1/2$ for $j\not\in A$.

If we plug $\omega_{G,m}=(1,1,\dots,1)$ in inequality $(\spadesuit_1)$ and we obtain that the distance is always one.
If we plug it in one of the inequalities of $(\spadesuit_2)$ on the left side we obtain:

 $$\sum_{ j\in A}((\omega_{G,m})_0^j+(\omega_{G,m})_\alpha^j)+\sum_{ j\not\in A}((\omega_{G,m})_\beta^j+(\omega_{G,m})_\gamma^j)=2|A|+2(m-|A|)=2m.$$

 Since $\omega_{G,m}\in 4P_{G,m}$ the lattice distance of $\omega_{G,m}$ to this facet is $(2m-4)/2$. This is 1 only for $m=3$. Therefore, for $m=3$ our polytope is Gorenstein and otherwise not.
\end{proof}

\begin{Corollary}\label{KimuraGor}
Let $\mathcal{T}$ be a trivalent tree. Then the polytope associated to $\mathcal{T}$ and the group $\mathbb{Z}_{2}\times \mathbb{Z}_{2}$ is Gorenstein of index $4$ and the polytope associated to $\mathcal{T}$ and  the group $\mathbb{Z}_{3}$ is Gorenstein of index $3$.
\end{Corollary}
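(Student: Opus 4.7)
The plan is to prove this by induction on the number $n$ of leaves of the trivalent tree $\mathcal{T}$, with Theorem \ref{Gore} providing the base case and Theorem \ref{fibGor} powering the inductive step. The base case $n=3$ is the tripod, for which the Gorenstein property with indices $3$ (for $\mathbb{Z}_3$) and $4$ (for $\mathbb{Z}_2\times\mathbb{Z}_2$) is exactly what Theorem \ref{Gore} supplies. For $n\ge 4$, I would use that every trivalent tree has a \emph{cherry}: two leaves $\ell_1,\ell_2$ meeting at a common internal vertex $v$. Letting $e$ be the third (necessarily internal) edge at $v$, cutting $\mathcal{T}$ along $e$ decomposes $\mathcal{T}$ as the gluing of a tripod $\mathcal{T}_1$ centered at $v$ with a trivalent tree $\mathcal{T}_2$ having $n-1$ leaves. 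At the polytope level this gluing becomes a toric fiber product
\[
P_{\mathcal{T}} \;=\; P_{\mathcal{T}_1} \times_{\Delta_{|G|-1}} P_{\mathcal{T}_2},
\]
where the projections $\pi_i$ read off the coordinates attached to the new leaf created by the cut, and $\Delta_{|G|-1}$ is the polytope of the one-edge tree with group $G$, i.e.\ the standard simplex on $|G|$ vertices.

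To invoke Theorem \ref{fibGor} I must check that both $P_{\mathcal{T}_1}$ and $P_{\mathcal{T}_2}$ are Gorenstein of the same index $k$, and that their interior lattice points project to the same point of $k\Delta_{|G|-1}$. The first requirement is immediate: $P_{\mathcal{T}_1}$ is Gorenstein of index $k$ ($k=3$ for $\mathbb{Z}_3$, $k=4$ for $\mathbb{Z}_2\times\mathbb{Z}_2$) by Theorem \ref{Gore}, and $P_{\mathcal{T}_2}$ by the inductive hypothesis. For the second, I would replay the symmetry argument from the proof of Theorem \ref{Gore}: the group $H_{G,\mathcal{T}_i}$ obtained by assigning to each leaf an element of $G$ compatibly with the sum relations at internal vertices acts on $P_{\mathcal{T}_i}$ and permutes interior lattice points, so by uniqueness any such point must have all coordinates equal, forcing it to be the all-ones vector. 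Its projection onto the $|G|$ coordinates of the cut edge is the all-ones vector of length $|G|$, which is independent of the side, hence $\pi_1(p_1)=\pi_2(p_2)$. Theorem \ref{fibGor} then yields the Gorenstein property of the same index $k$ for $P_{\mathcal{T}}$, and its \emph{moreover} clause identifies the interior point of $kP_{\mathcal{T}}$ as the all-ones vector, preserving the induction hypothesis for the next step.

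The step I expect to be the main obstacle is the fiber-product identification $P_{\mathcal{T}} = P_{\mathcal{T}_1} \times_{\Delta_{|G|-1}} P_{\mathcal{T}_2}$. While the decomposition of group-based tree model polytopes under gluing along an internal edge is a well-known piece of phylogenetic algebraic geometry (going back to Sturmfels--Sullivant and used throughout the references cited in the introduction), it nevertheless deserves a careful verification through the vertex and lattice descriptions, checking that vertices of $P_{\mathcal{T}_1}$ and $P_{\mathcal{T}_2}$ whose cut-edge projections agree glue precisely to vertices of $P_{\mathcal{T}}$, and that the resulting lattice is compatible with $L_{G,\mathcal{T}}$. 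Once that identification is in place, the rest of the argument is a direct application of the machinery already assembled in Theorems \ref{Gore} and \ref{fibGor}.
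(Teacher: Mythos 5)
Your proposal is correct and takes essentially the same route as the paper, whose entire proof is the one-line citation ``By Theorem \ref{fibGor} and Theorem \ref{Gore}.'' You have simply made explicit the induction on leaves via cherries, the identification of the glued polytope as a toric fiber product over $\Delta_{|G|-1}$, and the verification (via the $H$-symmetry and the \emph{moreover} clause of Theorem \ref{fibGor}) that the interior points match up -- all details the paper leaves implicit.
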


\begin{proof}
By Theorem \ref{fibGor} and Theorem \ref{Gore}.
\end{proof}

\section{Normality of the polytope associated to $\mathbb Z_3$} \label{section_z3normal}
As in Section 2, we denote by $P_m$ the defining polytope of the variety $X(T,\mathbb Z_3)$, where $T$ is the $m$-claw tree.
This whole section is devoted to the proof of the Theorem \ref{normalz3}.

The proof uses techniques from \cite{martinko} where normality is proved for the group $\mathbb Z_2\times \mathbb Z_2$.

As in the paper \cite{martinko} it is again more convenient to work with $3m$ coordinates, i.e in this section we take $P_m\subset\mathbb R^{3m}$. We will need the facet description for the dilated polytope $kP_m$. It is easy to check from Theorem \ref{facetsZ3} that the facet description of $kP_m$ is given by the following equalities and inequalities:

$$x_i^j\ge 0, \text{ for } 0\le i\le 2,1\le j\le m,$$
$$x_0^j+x_1^j+x_2^j= k, \text{ for } 1\le j\le m,$$
$$\langle u_{a_1},x^1\rangle+\langle u_{a_2},x^2\rangle+\dots+\langle u_{a_m},x^m\rangle\ge 2k,$$
$$\langle w_{a_1},x^1\rangle+\langle w_{a_2},x^2\rangle+\dots+\langle w_{a_m},x^m\rangle\ge 2k,$$
for all $(a_1,a_2,\dots,a_m)\in \{0,1,2\}^m$ such that $a_1+a_2+\dots+a_m\equiv 2\pmod 3$, where $u_0=(0,1,2),u_1=(1,2,0),u_2=(2,0,1),w_0=(0,2,1),w_1=(1,0,2),u_2=(2,1,0)$ and $x^j=(x_0^j,x_1^j,x_2^j).$

For the $m$-tuple $A=(a_1,\dots,a_m)\in\{0,1,2\}^m$ we denote the facets of $kP_m$ corresponding to $A$ and first and second inequality by $F_1(k,A)$ and $F_2(k,A)$, respectively. For the point $x$ we denote left sides of the inequalities by $S_1(x,A)$ and $S_2(x,A)$. That means $x\in F_i(k,A)\Leftrightarrow S_i(x,A)=2k$.

We want to prove that every point $x\in kP_m\cap L_m$ decomposes to a sum of lattice points from $P_m$. It is sufficient to prove it for an image of $x$ under any of group actions described as symmetries of $P_m$.

We define a linear ordering on multisets of three real numbers with sum $k$. For two multisets $\{a,b,c\}$ and $\{a',b',c'\}$ where $a\ge b\ge c$ and $a'\ge b'\ge c'$ we say $$\{a,b,c\}\succeq \{a',b',c'\}\Leftrightarrow (a>a') \vee (a=a'\wedge b>b').$$

Consider $x\in kP_m\cap L_m$. If we order multisets $\{x_0^j,x_1^j,x_2^j\}$ we can ensure that multiset for $j=m$ is the smallest one in this ordering by acting with suitable permutation from $\mathbb S_m$.

If we denote by $g_j$ the most frequent element in $j$-th multiset from $G$-presentation of $x$ we can act by $(g_1,g_2,\dots,g_{m-1},g_1+\dots+g_{m-1})\in H_m$ to obtain a point $x$ for which $x_0^j\ge x_1^j,x_2^j$ for all $1\le j<m$.

This means that for a point $x\in kP_m\cap L_m$, without loss of generality, we may assume the following two facts:

\begin{equation}
\forall j\in\{1,2,\dots,m-1\}:\ \{x_0^j,x_1^j,x_2^j\}\succeq\{x_0^m,x_1^m,x_2^m\}.
\end{equation}
\begin{equation}
\forall j\in\{1,2,\dots,m-1\}:\ x_0^j=\max\{x_0^j,x_1^j,x_2^j\}.
\end{equation}

\begin{Definition}
Let $x\in kP_m\cap L_m$. A vertex $v$ of $P_m$ is called \emph{$x$-good} if all coordinates of the point $x-v$ are non-negative.
\end{Definition}

We keep the notation from the Lemma \ref{vert0}. However, now by $v(j_1,j_2)$ we mean the same vertex but in $\mathbb R^{3m}$. We also denote $v(0)$ the vertex whose projection is 0, i.e. $v(0)_0^j=1$ for all $1\le j\le m$. Let us consider the subset of all vertices of the polytope $P_m$ given by $V_m=\{v(0),v(j,m),v(m,j)|1\le j\le m-1\}$.

We may now start the proof.

\begin{Lemma}\label{2}
Let $x\in L_m\cap kP_m$, $i\in\{1,2\}$ and $A=(a_1,\dots,a_m)\in\{0,1,2\}^m$. Then $$S_i(x,A)\equiv 2k\pmod 3.$$

\end{Lemma}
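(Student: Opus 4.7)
The plan is to reduce each inner product $\langle u_{a_j}, x^j\rangle$ (and $\langle w_{a_j}, x^j\rangle$) modulo $3$ and then combine three ingredients: the lattice condition defining $L_m$, the slice-sum equality $\sum_{i=0}^{2} x_i^j = k$ that holds on $kP_m$, and the hypothesis $a_1 + \dots + a_m \equiv 2 \pmod{3}$ built into the indexing of the facets. The key observation is that the coordinate vectors $u_a$ and $w_a$ admit a clean mod-$3$ description: a direct inspection of the listed values gives
$$(u_a)_i \equiv i + a \pmod{3}, \qquad (w_a)_i \equiv a - i \pmod{3}$$
for every $a,i \in \{0,1,2\}$.

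With this observation, I would expand
$$S_1(x,A) \;=\; \sum_{j=1}^{m}\sum_{i=0}^{2} (u_{a_j})_i\, x_i^j \;\equiv\; \sum_{j=1}^{m}\sum_{i=0}^{2} (i + a_j)\, x_i^j \;=\; \sum_{j,i} i\, x_i^j \;+\; \sum_{j=1}^{m} a_j \!\sum_{i=0}^{2} x_i^j \pmod{3}.$$
The first summand is exactly $\sum_{g\in\mathbb Z_3,\,j} g\cdot x_g^j$, which is $\equiv 0\pmod 3$ by the defining relation of the lattice $L_m$. The second summand equals $k(a_1+\dots+a_m)$ by the slice-sum equality, and this is $\equiv 2k \pmod 3$ by the facet-indexing hypothesis on $A$. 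Together these give $S_1(x,A) \equiv 2k \pmod{3}$, as required.

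For $S_2(x,A)$ the computation is entirely analogous, with $(i+a_j)$ replaced by $(a_j - i)$; this yields
$$S_2(x,A) \;\equiv\; k(a_1+\dots+a_m) \;-\; \sum_{j,i} i\, x_i^j \;\equiv\; 2k \pmod 3.$$
I do not anticipate any real obstacle: once the mod-$3$ formulas for $u_a$ and $w_a$ are in hand, the lemma reduces to a one-line rearrangement that uses each of the three defining constraints (lattice, dilation, facet index) exactly once. The only point to treat carefully is that $u_a$ and $w_a$ are honest integer vectors, so term-by-term reduction modulo $3$ inside the inner products is legitimate.
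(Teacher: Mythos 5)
Your proof is correct and takes essentially the same route as the paper's: your identity $(u_a)_i\equiv i+a\pmod 3$ is exactly the paper's observation that $\langle u_a,x^j\rangle\equiv x_1^j+2x_2^j+ak\pmod 3$, after which both arguments combine the lattice relation $\sum_{j,i} i\,x_i^j\equiv 0\pmod 3$ with $k\sum_j a_j\equiv 2k$. The only (shared, harmless) subtlety is that the congruence $\sum_j a_j\equiv 2\pmod 3$ is not stated in the lemma but is implicit because $S_i(x,A)$ is only introduced for facet-indexing tuples $A$.
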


\begin{proof}
We consider only the case $i=1$, the other case is analogous.

Since $x\in L_m$ we get
$$\sum_{j=1}^m (x^1_j+2x_2^j)\equiv 0\pmod 3.$$

Moreover it is easy to verify that $$\langle u_i,x^j\rangle\equiv x_1^j+2x_2^j+i(x_0^j+x_1^j+x_2^j)\equiv x_1^j+2x_2^j+ik \pmod 3.$$

Therefore, we obtain that
$$S_i(x,A)=\sum_{j=1}^m\langle u_{a_j},x^j\rangle\equiv\sum_{j=1}^m x_1^j+2x_2^j+k\sum_{j=1}^m a_j\equiv 2k\pmod 3. $$
\end{proof}

The following result implies the fact that it is sufficient to consider only such points $x$ for which the following condition holds:

\begin{equation}
\forall j\in\{1,2,\dots,m\},g\in G: x_0^j<k.
\end{equation}

\begin{Lemma}\label{3}
Suppose that for every positive integers $k,m$ and every $x\in kP_m\cap L_m$ such that $x_0^j<k$ for all $j$ we can write $x$ as a sum of $k$ vertices of $P_n$. Then $P_m$ is normal, for every positive integer $m$.
\end{Lemma}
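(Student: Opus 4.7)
The plan is to induct on $m$, using the hypothesis for the case $x_0^j<k$ at every $j$ and a dimensional reduction otherwise. The base cases $m\le 2$ are immediate: $P_1$ is a single point, and $P_2$ is a $2$-simplex that is unimodular with respect to $L_2$.

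For the inductive step, assume the claim for $P_{m-1}$ and let $x\in kP_m\cap L_m$. The actions of $\mathbb{S}_m$, $H_m$, and $\Aut(\mathbb{Z}_3)$ permute the vertices of $P_m$ and preserve $L_m$, so any decomposition of an image of $x$ under these actions pulls back to one of $x$; I would first replace $x$ by an image satisfying conditions (1) and (2). If the resulting point also satisfies (3), the hypothesis of the lemma finishes. Otherwise, by (2) some coordinate $x_0^{j_0}=k$, which together with $\sum_g x_g^{j_0}=k$ forces $x_1^{j_0}=x_2^{j_0}=0$.

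Let $x''\in\mathbb{R}^{3(m-1)}$ be obtained from $x$ by deleting the three coordinates at position $j_0$. The central claim is that $x''\in kP_{m-1}\cap L_{m-1}$: the lattice conditions transfer because position $j_0$ contributes $0$ to the sum $\sum_{g,j}g\cdot x_g^j$, and every facet inequality of $kP_{m-1}$ indexed by a tuple $A'$ with $\sum a'_j\equiv 2\pmod 3$ is precisely the restriction of the facet inequality of $kP_m$ indexed by the tuple $A$ obtained by inserting $0$ in slot $j_0$, because $u_0=(0,1,2)$ and $w_0=(0,2,1)$ both annihilate $(k,0,0)$, so position $j_0$ contributes nothing to $S_i(x,A)$ and the congruence $\sum a_j\equiv 2\pmod 3$ is unaffected by the inserted zero. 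By the inductive hypothesis, $x''=w_1+\cdots+w_k$ with each $w_i$ a vertex of $P_{m-1}$. Each $w_i$ lifts to a vertex of $P_m$ by inserting $(1,0,0)$ at slot $j_0$ (the group-sum constraint is preserved since $0$ is added), and these $k$ lifted vertices sum to $x$. The principal obstacle is precisely the facet-restriction step above; once the annihilation $\langle u_0,(k,0,0)\rangle=\langle w_0,(k,0,0)\rangle=0$ is observed, everything else, including the symmetrization reductions and the lifting of vertices, is formal.
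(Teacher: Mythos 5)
Your proposal is correct, and it is essentially the argument that the paper delegates to the analogous Lemma~3 of the cited $\mathbb{Z}_2\times\mathbb{Z}_2$ normality paper: induct on the number of leaves, and when some block equals $(k,0,0)$ delete that leaf, decompose in $kP_{m-1}$, and lift back by inserting the identity element. The key observation you isolate --- that $u_0$ and $w_0$ annihilate $(k,0,0)$ so the inserted $0$ changes neither $S_i$ nor the congruence class of $\sum a_j$ --- is exactly what makes the restriction $x''\in kP_{m-1}\cap L_{m-1}$ go through, so the proof is complete.
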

\begin{proof}
Similar like in the case of the group $\ZZ_2\times \ZZ_2$, see \cite[Lemma 3]{martinko}.
\end{proof}

We prove that for every point $x\in kP_m\cap L_m$ there exists a vertex $v$ such that $x-v\in (k-1)P_m\cap L_m$. It is sufficient to consider the points which satisfy $(1)-(3)$ and we will show that we can use a vertex $v\in V_m$. The normality of $P_m$ will follow by induction. Since $x-v\in L_m$ it remains to prove that $x-v\in (k-1)P_m$. We will show this by checking inequalities from the facet description of $(k-1)P_m$.

We examine all inequalities. We split them into groups depending on the sum $a_1+\dots+a_m$.

\begin{Proposition}\label{big}
Let $x\in kP_m\cap L_m, k\ge 3$ satisfy $(1)-(3)$ and $A=(a_1,\dots,a_m)\in\{0,1,2\}^m$. \begin{itemize}
\item[a)]If $a_1+\dots+a_m\ge 8$, then $S_i(x-v,A)\ge 2(k-1)$ for any $i\in\{1,2\}$ and any $x$-good vertex $v$ of $P_m$.
\item[b)] If $a_1+\dots+a_m=5,a_m<2$, then $S_i(x-v,A)\ge 2(k-1)$ for any $i\in\{1,2\}$ and any $x$-good vertex $v$ of $P_m$.
\item[c)] If $a_1+\dots+a_m=5,a_m=2$, then $S_2(x-v,A)\ge 2(k-1)$ for any $x$-good vertex $v$ of $P_m$.
\end{itemize}
\end{Proposition}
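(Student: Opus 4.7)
The plan is to bound $S_i(x-v,A)=S_i(x,A)-S_i(v,A)$ by deriving per-coordinate lower bounds on $\langle u_{a_j},x^j\rangle$ and $\langle w_{a_j},x^j\rangle$ from hypotheses (1)--(3), combining them with the trivial upper bound $S_i(v,A)\le 2m$, and closing the small residual gap with Lemma \ref{2}.

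The first step is to use $x_0^j+x_1^j+x_2^j=k$ to rewrite
\[
\langle u_0,x^j\rangle=k+x_2^j-x_0^j,\quad \langle u_1,x^j\rangle=k+x_1^j-x_2^j,\quad \langle u_2,x^j\rangle=k+x_0^j-x_1^j,
\]
and the three symmetric identities $\langle w_0,x^j\rangle=k+x_1^j-x_0^j$, $\langle w_1,x^j\rangle=k+x_2^j-x_1^j$, $\langle w_2,x^j\rangle=k+x_0^j-x_2^j$. Hypothesis~(2) then yields $\langle u_2,x^j\rangle,\langle w_2,x^j\rangle\ge k$ for every $j<m$, while (2) together with (3) upgrades the trivial $\ge 0$ bound to $\ge 1$ for every $\langle u_a,x^j\rangle$ and $\langle w_a,x^j\rangle$ at $j<m$ (because each coordinate of $x^j$ lies in $[0,k-1]$). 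Hypothesis~(1), combined with the bound $\max_g x_g^j\le k-1$ for $j<m$, also forces $\max_g x_g^m\le k-1$, so the estimate $\ge 1$ extends to $j=m$ as well. Since every vertex $v$ of $P_m$ has $v^j=e_{g_j}$, each summand of $S_i(v,A)$ lies in $\{0,1,2\}$, so $S_i(v,A)\le 2m$. Applying Lemma \ref{2} to $x$ and to $v$, the difference $S_i(x-v,A)$ is a nonnegative integer with $S_i(x-v,A)\equiv 2(k-1)\pmod 3$, so it suffices to prove $S_i(x-v,A)>2k-5$.

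The casework then splits by the multiset of $(a_1,\dots,a_m)$. Setting $n_2=|\{j<m:a_j=2\}|$, the bounds above give $S_i(x,A)\ge n_2 k+(m-n_2)$ and hence $S_i(x-v,A)\ge n_2(k-1)-m$. In case~(a) the hypothesis $\sum a_j\ge 8$ forces $n_2$ to be sufficiently large that, after the modular closure, the target $\ge 2(k-1)$ is reached in most subcases. The remaining subcases --- typically those where many $a_j$'s equal $1$ --- are handled by refining the per-index bound using that $v$ is $x$-good: the condition $x_{g_j}^j\ge 1$ combined with (2), (3) produces strictly larger contributions, such as $\langle u_1,x^j\rangle\ge 3$ when $(a_j,g_j)=(1,1)$. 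Cases (b) and (c), with $\sum a_j=5$, follow similarly; in (c) the asymmetry between $u$ and $w$ at the uncontrolled index $j=m$ forces the use of $i=2$ only, because $\langle w_2,x^m\rangle$ tolerates the absence of hypothesis~(2) at $j=m$ (one still has $\ge 1$ by the extension via (1)) whereas the corresponding bound on $\langle u_2,x^m\rangle$ is not strong enough to feed into the same counting.

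The main obstacle is that the per-index lower bounds above are frequently tight, and when naively summed they fall just short of $2(k-1)$; the combinatorial heart of the argument is a careful case-by-case refinement of these bounds according to the alignment between $a_j$ and $g_j$, together with the modular closure from Lemma \ref{2} that weakens the target from an exact $\ge 2(k-1)$ to the more flexible $>2k-5$.
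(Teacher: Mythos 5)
Your overall architecture (per-index lower bounds on $\langle u_{a_j},\cdot\rangle$, then the mod-$3$ congruence from Lemma \ref{2} to lift $\ge 2k-4$ up to $\ge 2k-2$) matches the paper's, but the per-index bounds you actually derive are too weak to reach the target, and the gap is not closable by the refinements you sketch.

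The target $2(k-1)$ grows linearly in $k$, yet for indices with $a_j\in\{0,1\}$ your bounds $\langle u_{a_j},x^j\rangle\ge 1$ (or $\ge 3$ after the $x$-goodness refinement) are constants independent of $k$. Only indices $j<m$ with $a_j=2$ contribute a term of order $k$ in your counting. In parts b) and c) one has $a_1+\dots+a_m=5$, so $n_2\le 2$; already your best case $n_2(k-1)-m=2(k-1)-m$ is below $2k-4$ for every $m\ge 3$, and when $n_2\le 1$ (e.g.\ $A=(1,1,1,1,1,0,\dots,0)$) your lower bound on $S_i(x,A)$ is $O(m)$ with no $k$-dependence at all, so no amount of case analysis on the alignment of $a_j$ with $g_j$, nor the mod-$3$ closure, can produce $2k-4$. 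The same failure occurs in part a) whenever many $a_j$ equal $1$. A secondary loss is the global subtraction $S_i(v,A)\le 2m$: since $y=x-v\ge 0$ one has $\langle u_0,y^j\rangle\ge 0$ for free, whereas subtracting $2$ at every index makes even the $n_2$-rich subcases fail for $m>2k$.

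The missing idea is an order-$k$ bound at the $a_j=1$ indices. Condition $(2)$ gives $2x_2^j\le x_0^j+x_2^j=k-x_1^j$ for $j<m$, hence
\[
\langle u_1,x^j\rangle=k+x_1^j-x_2^j\ \ge\ \tfrac k2,
\]
and after subtracting $v^j$ (using $x$-goodness to control the case $v_1^j=1$) one gets $\langle u_1,y^j\rangle\ge \tfrac k2-1$, together with $\langle u_2,y^j\rangle\ge k-2$ and $\langle u_0,y^j\rangle\ge 0$. Since $a_1+\dots+a_{m-1}\ge 4$ in parts a) and b), summing $\langle u_{a_j},y^j\rangle\ge a_j(\tfrac k2-1)$ gives $S_1(x-v,A)\ge 4(\tfrac k2-1)=2k-4$, and Lemma \ref{2} finishes. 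Part c) needs the additional estimate $\langle w_2,y^m\rangle\ge \tfrac k2-1$ at the index $m$ (using $x_1^m\ge x_2^m$), which is the genuine reason only $i=2$ works there; your explanation via the ``$\ge 1$'' bound at $j=m$ does not capture this, since $\langle u_2,x^m\rangle\ge 1$ holds just as well but is insufficient.
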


\begin{proof}
Let $y=x-v$. It is sufficient to prove parts a) and b) when $i=1$, the other case is analogous.

We first notice the following two inequalities which hold for all $1\le j\le m-1$: $$\langle u_2,y^j\rangle=2y_0^j+y_2^j=2x_0^j+x_2^j-2v_0^j-v_2^j\ge x_0^j+x_1^j+x_2^j-2=k-2,$$
$$\langle u_1,y^j\rangle =y_0^j+2y_1^j= x_0^j+2x_1^j-v_0^j-2v_1^j\ge \frac 12 (x_0^j+x_1^j+x_2^j)-1+\frac 32 x_1^j-v_1^j\ge \frac k2-1.$$
This yields $\langle u_i,y^j\rangle\ge i(k/2-1)$. Further, in all cases we have $a_1+\dots+a_{m-1}\ge 4$. Now we can prove the desired inequality:

$$S_1(x-v,A)\ge ka_1/2+\dots+ka_{m-1}/2\ge 4(k/2-1)= 2k-4.$$

We know that $S_i(x-v,A)\equiv 2k-2\pmod 3$, which implies $S_i(x-v,A)\ge 2k-2$.

For the part c): from the fact $x_1^m\ge x_2^m$ we get an analogous inequality $$\langle u_2,y^m\rangle\ge 2y_0^j+y_1^j= 2x_0^j+x_1^j-2v_0^j-v_1^j\ge \frac 12 (x_0^j+x_1^j+x_2^j)-1+\frac 32 x_0^j-v_0^j\ge \frac k2-1.$$

Together with the fact that $a_1+\dots+a_{m-1}=3$ we obtain
 $$S_2(x-v,A)\ge ka_1/2+\dots+ka_{m-1}/2+\frac k2-1\ge 3(\frac k2-1)+\frac k2-1= 2k-4,$$
 Now we conclude our desired result in the same way as in the parts a) and b).
\end{proof}

For the cases which are not covered by the Proposition \ref{big} we use induction. We know from induction hypothesis that $S_i(x,A)\ge 2k$, for all $n$-tuples $A=(a_1,\dots,a_m)$. We observe how $S_i(x,A)\ge 2k$ changes when we subtract the vertex $v$ from $x$.

\begin{Lemma}\label{small}
Let $x\in kP_m\cap L_m$, $v\in V_m$, $A=(a_1,\dots,a_m)\in\{0,1,2\}^m$. Denote $D_i(A)=S_i(x,A)-S_i(x-v,A)$.
\begin{itemize}
\item If $a_1+\dots+a_m=5$, $a_m=2$, then $D_1(A)\in\{2,5\}$.
\item If $a_1+\dots+a_m=2$, then $D_1(A)\in\{2,5\}$. Moreover, if $v=v(0)$ or $a_m=2$ then $D_1(A)=2$. If $a_m=1,a_j=1$ and $v=v(m,j)$, then also $D_1(A)=2$.
\item If $a_1+\dots+a_m=2$ then $D_2(A)\in\{2,5\}$. Moreover, if $v=v(0)$ or $a_m=2$ then $D_2(A)=2$.  If $a_m=1$ and $v=v(m,j)$ for some $j$, then also $D_2(A)=2$.
\end{itemize}
\end{Lemma}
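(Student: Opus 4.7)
The plan is to use the linearity of $S_i$ in its first slot:
$$D_i(A) = S_i(x, A) - S_i(x - v, A) = S_i(v, A),$$
so the statement reduces to computing $S_i(v, A)$ for each $v \in V_m$. Since every such $v$ is a vertex of $P_m$, and in particular $v \in L_m \cap P_m$, Lemma \ref{2} applied with $k = 1$ yields $S_i(v, A) \equiv 2 \pmod 3$. Combined with the non-negativity of $S_i(v, A)$, this already forces $D_i(A) \in \{2, 5, 8, \ldots\}$, so the remaining tasks are to prove the upper bound $D_i(A) \le 5$ and to identify the sub-cases in which $D_i(A) = 2$.

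For every $v \in V_m$, each slice $v^j$ is a standard basis vector $e_{g_j}$, and $g_j \neq 0$ for at most two indices $j_1, j_2$. Using the identities $\langle u_a, e_b \rangle = (a + b) \bmod 3$ and $\langle w_a, e_b \rangle = (a - b) \bmod 3$, both valued in $\{0, 1, 2\}$, one obtains
$$S_i(v, A) = T - a_{j_1} - a_{j_2} + \tau_1 + \tau_2,$$
where $T = a_1 + \cdots + a_m$ and $\tau_\ell$ is the inner-product contribution at slot $j_\ell$ (with $\tau_1 = \tau_2 = 0$ in the degenerate case $v = v(0)$).

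Since $\tau_\ell \le 2$, one has $S_i(v, A) \le T - a_{j_1} - a_{j_2} + 4$. In bullets (b) and (c), where $T = 2$, this gives $S_i(v, A) \le 6$; combined with the mod-$3$ congruence and non-negativity, this forces $D_i(A) \in \{2, 5\}$. In bullet (a), $T = 5$ and $a_m = 2$ imply $m \in \{j_1, j_2\}$, so $a_{j_1} + a_{j_2} \ge 2$ and $S_1(v, A) \le 7$, giving $D_1(A) \in \{2, 5\}$ by the same congruence argument.

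For the sub-cases where $D_i(A) = 2$, I expect to read the conclusion directly off a short table of the twelve values $\langle u_a, e_b \rangle$ and $\langle w_a, e_b \rangle$ for $a, b \in \{0, 1, 2\}$. When $v = v(0)$ both adjustments vanish and $D_i(A) = T = 2$. When $a_m = 2$ (with $T = 2$, forcing $a_j = 0$ at the other special slot), the two quantities $\tau_\ell - a_{j_\ell}$ cancel for every combination of $v \in \{v(m, j), v(j, m)\}$ and $i \in \{1, 2\}$, yielding $D_i(A) = 2$. The remaining sub-cases with $a_m = 1$ (and $v = v(m, j)$) reduce to verifying the same cancellation for $a_j \in \{0, 1\}$, the only possibilities under $T = 2$. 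The main obstacle is purely bookkeeping: $u_a$ and $w_a$ shift cyclically in opposite senses, and the two vertices $v(m, j)$ and $v(j, m)$ swap the roles of $g = 1$ and $g = 2$, so tabulating the values of $\langle u_a, e_b \rangle$ and $\langle w_a, e_b \rangle$ at the outset is the safest way to verify all cases uniformly.
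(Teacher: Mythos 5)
Your proposal is correct and takes essentially the same route as the paper, which likewise just computes, summand by summand, how much $S_i$ drops when $v$ is subtracted (i.e.\ computes $S_i(v,A)$ for each $v\in V_m$). Your use of Lemma~\ref{2} with $k=1$ to force $S_i(v,A)\equiv 2\pmod 3$ is a tidy way to organize that finite check, and the identities $\langle u_a,e_b\rangle=(a+b)\bmod 3$, $\langle w_a,e_b\rangle=(a-b)\bmod 3$ together with the bookkeeping $g_{j_1}=1$, $g_{j_2}=2$ for $v(j_1,j_2)$ do verify all the listed sub-cases.
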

\begin{proof}
We check for every possible $v$ and $A$ how many summands in $S_i(x,A)$ will decrease when we subtract $v$. We see that it will be always decreased by 2 or 5.
\end{proof}

\begin{Remark}
We could list all cases when $D_i(A)=2$, but since it would be a long list, we just list those which will be important later in the proof.
\end{Remark}

Now we consider the following special case:

\begin{Proposition}\label{0} Let $x\in kP_m\cap L_m$ satisfying conditions $(1)-(3)$. Suppose that $0$ is also the most frequent element in the $m$-th multiset from $G$-presentation of $x$. Then $x-v(0)\in (k-1)P_m$.
\end{Proposition}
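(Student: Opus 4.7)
The plan is to verify that $y := x - v(0)$ satisfies every inequality in the facet description of $(k-1)P_m$. The hypothesis that $0$ is the most frequent element in every multiset (including the $m$-th one) forces $x_0^j = \max\{x_0^j, x_1^j, x_2^j\} \ge \lceil k/3 \rceil \ge 1$ for all $j$; hence $v(0)$ is $x$-good, every coordinate of $y$ is non-negative, and $y_0^j + y_1^j + y_2^j = k-1$. This settles the constraints $x_i^j \ge 0$ and the block-sum equalities for $(k-1)P_m$.

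It remains to verify $S_i(y, A) \ge 2(k-1)$ for each $A = (a_1, \dots, a_m) \in \{0,1,2\}^m$ with $s := a_1 + \dots + a_m \equiv 2 \pmod 3$. I would stratify by $s$. Proposition \ref{big} covers the cases $s \ge 8$, $s = 5$ with $a_m \ne 2$, and $s = 5$ with $a_m = 2$ for $i = 2$. For $s = 2$, the Moreover-clauses of Lemma \ref{small} give $D_i(A) = 2$ when $v = v(0)$, so $S_i(y, A) = S_i(x, A) - 2 \ge 2k - 2$. The only case not immediately settled is $s = 5$, $a_m = 2$, $i = 1$; there a direct computation gives $D_1(A) = S_1(v(0), A) = \sum_j a_j = 5$, so one needs the strengthened bound $S_1(x, A) \ge 2k + 3$. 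By Lemma \ref{2}, $S_1(x, A) \equiv 2k \pmod 3$, and $S_1(x, A) \ge 2k$ since $x \in kP_m$, so it suffices to exclude the extremal possibility $S_1(x, A) = 2k$.

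Assume for contradiction $S_1(x, A) = 2k$. Because $0$ is the most frequent element of every multiset, $x_0^j \ge x_1^j$ for all $j \le m$, which is equivalent to $\langle u_2, x^j \rangle = 2 x_0^j + x_2^j \ge k$. The residual sum $a_1 + \dots + a_{m-1} = 3$ realizes one of two patterns: (a) one $a_{j_1} = 2$, one $a_{j_2} = 1$, and the rest $0$; or (b) three $a_{j_i} = 1$ and the rest $0$ (which requires $m \ge 4$). In pattern (a), summing the lower bounds $\langle u_2, x^{j_1} \rangle \ge k$, $\langle u_2, x^m \rangle \ge k$, $\langle u_1, x^{j_2} \rangle \ge 0$, and $\langle u_0, x^j \rangle \ge 0$ over the remaining indices already gives $S_1(x, A) \ge 2k$, and equality forces $\langle u_1, x^{j_2} \rangle = x_0^{j_2} + 2 x_1^{j_2} = 0$, giving $x_0^{j_2} = 0$ and contradicting the maximality of $x_0^{j_2}$ in a multiset of sum $k > 0$. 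In pattern (b), equality first forces $\langle u_0, x^j \rangle = 0$ for every $j \notin \{j_1, j_2, j_3, m\}$, hence $x_0^j = k$, contradicting condition (3) when $m \ge 5$. When $m = 4$, the equality $\sum_i \langle u_1, x^{j_i} \rangle = k$ combined with $\langle u_1, x^{j_i} \rangle \ge x_0^{j_i} \ge k/3$ forces $x_0^{j_i} = k/3$ and $x_1^{j_i} = 0$ for $i = 1, 2, 3$; then $x_2^{j_i} = 2k/3 > x_0^{j_i}$, again contradicting maximality.

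The main obstacle is this extremal analysis in the case $s = 5$, $a_m = 2$, $i = 1$, and in particular its sub-case $m = 4$ within pattern (b), where neither Proposition \ref{big} nor condition (3) suffices and one must extract the contradiction from the combined maximality of $x_0^j$ at every $j \le m$, which is precisely what the extra hypothesis of the proposition provides.
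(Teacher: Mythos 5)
Your proof is correct, and its skeleton matches the paper's: check that $v(0)$ is $x$-good, dispatch the sums $a_1+\dots+a_m\ge 8$ and the sum-$5$ cases with $a_m\ne 2$ (and $i=2$, $a_m=2$) via Proposition \ref{big}, and dispatch the sum-$2$ cases via the $v=v(0)$ clause of Lemma \ref{small}. Where you genuinely diverge is the one case Proposition \ref{big} leaves open, namely $a_1+\dots+a_m=5$, $a_m=2$, $i=1$. The paper's (very terse) justification is that the extra hypothesis extends the key estimate of Proposition \ref{big} to the index $j=m$ --- since $x_0^m$ is now maximal one gets $\langle u_{a_m},y^m\rangle\ge a_m(k/2-1)$ as well, so the averaging bound $S_1(x-v,A)\ge 5(k/2-1)\ge 2k-4$ together with the congruence of Lemma \ref{2} closes the case exactly as in parts a) and b). You instead prove the stronger statement that $x$ lies strictly off the facet, $S_1(x,A)\ge 2k+3$, by an extremal analysis of the two residual patterns for $a_1+\dots+a_{m-1}=3$, and then subtract $D_1(A)=5$; this is closer in spirit to the paper's Proposition \ref{A5} (which proves the analogous non-incidence under condition $(4)$ rather than under your hypothesis), and your case analysis --- including the $m=4$ sub-case of pattern (b), where the forced values $x_0^{j_i}=k/3$, $x_2^{j_i}=2k/3$ contradict maximality --- checks out. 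The trade-off: the paper's route is shorter because it recycles the computation already done in Proposition \ref{big}, while yours is self-contained and makes explicit exactly where the hypothesis ``$0$ is most frequent in the $m$-th multiset'' enters; it also yields the extra information that $x\notin F_1(k,A)$ for these $A$, which the paper does not record.
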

\begin{proof}
Obviously, every multiset from $G$-presentation of $x$ contains $0$, so $v(0)$ is $x$-good. Inequalities for $n$-tuples with $a_1+\dots+a_m\ge 5$ hold for $x-v$ by Proposition \ref{big}, because when $0$ is also most frequent in the $m$-th multiset there is no special case when $a_1+\dots+a_m=5$. Inequalities for $a_1+\dots+a_m=2$ hold by Lemma \ref{small}, since we are subtracting vertex $v(0)$.
It follows that $x-v(0)\in (k-1)P_m$.
\end{proof}

This result implies that we can assume that for $x\in kP_m\cap L_m$ satisfying $(1),(2),(3)$ also holds the following condition:
\begin{equation}
\begin{aligned}
&\text{There exists no } h\in H_m\text{ such that the following conditions holds:}
\\ &0\text{ is the most frequent element in all multisests from } G \text{-presentation of }hx.
\end{aligned}
\end{equation}

We show that in our case the point $x$ cannot lie on many facets.

\begin{Proposition}\label{A5}
Let $x\in kP_m\cap L_m$ satisfy $(1)-(4)$, $A=(a_1,\dots,a_m)\in\{0,1,2\}^m$ with $a_1+\dots+a_m=5$ and $a_m=2$. Then:
\begin{itemize}
\item[a)]$x$ does not belong to facet $F_1(A)$ , i.e. $S_1(x,A)>2k$, for all such $A$ and $x$.
\item[b)] $S_1(x-v,A)\ge 2(k-1)$, for all $v\in V_m$.
\end{itemize}
\end{Proposition}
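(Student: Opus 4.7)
Part (b) reduces to part (a) via the divisibility of Lemma \ref{2}. Indeed, Lemma \ref{2} gives $S_1(x,A)\equiv 2k\pmod{3}$, so the strict inequality from part (a) upgrades to $S_1(x,A)\ge 2k+3$, and Lemma \ref{small} asserts $D_1(A)\in\{2,5\}$ for every $v\in V_m$; hence
\[
S_1(x-v,A)=S_1(x,A)-D_1(A)\ge 2k+3-5 = 2(k-1),
\]
which is the claim of part (b).

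For part (a), I argue by contradiction: assume $S_1(x,A)=2k$. The identity $\langle u_a,x^j\rangle=k+x_{2-a}^j-x_{-a}^j$ (immediate from $x_0^j+x_1^j+x_2^j=k$) rewrites this equality as $\sum_j(x_{-a_j}^j-x_{2-a_j}^j)=(m-2)k$. Since $S_1(x,A')\ge 2k$ for every tuple $A'$ with $\sum a'_j\equiv 2\pmod 3$, the tuple $A$ is a minimizer of $S_1(x,\cdot)$. Perturbing $A$ via $a_{j_1}\mapsto a_{j_1}+1$, $a_{j_2}\mapsto a_{j_2}-1$ preserves the congruence but cannot decrease $S_1$, and using $\langle u_{b+1}-u_b,x^j\rangle = k-3x_{-b-1}^j$ this yields the optimality inequalities
\[
x_{-a_{j_1}-1}^{j_1}\;\le\;x_{-a_{j_2}}^{j_2}\qquad\text{for all }j_1,j_2.
\]
Specializing to our $A$: taking $j_1=j_2=m$ gives $x_0^m\le x_1^m$, and taking $j_1=j_2=j$ at any position $j<m$ with $a_j=1$ gives $x_1^j\le x_2^j$; combining the latter with condition (2) forces the order $x_0^j\ge x_2^j\ge x_1^j$ at such $j$. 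I then construct a shift $h\in H_m$ under which $0$ is a (possibly tied) maximum at every column of $hx$, contradicting condition (4): set $h_j=0$ at $j<m$ (valid by (2)) and choose $h_m$ so that $x_{h_m}^m$ realizes the maximum of $x^m$; this is possible because $x_0^m\le x_1^m$ guarantees the maximum is attained at some index. If $x_0^m$ is itself tied with the maximum, the choice $h=0$ already contradicts (4). Otherwise $h_m\in\{1,2\}$ and the rigidity of the equality $S_1(x,A)=2k$, combined with condition (1), forces ties of the form $x_0^j=x_2^j$ at some $j$ with $a_j=1$ (in the Case A shape of $A$) or $x_0^j=x_1^j$ at some $j<m$ (in the Case B shape); these ties permit replacing $h_j=0$ with $h_j=2$ (respectively $h_j=1$) without losing the maximality of $0$ at column $j$, so the congruence $\sum_j h_j\equiv 0\pmod 3$ can be met.

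The main obstacle is the last step: proving rigorously that the equality $S_1(x,A)=2k$, together with conditions (1)--(3) and the optimality inequalities, always forces sufficiently many ties among the $x_i^j$'s so that the arithmetic constraint $\sum_j h_j\equiv 0\pmod 3$ is achievable within the set of admissible shifts. A careful case analysis is required, separating the subcases $x_1^m>x_2^m$, $x_2^m>x_1^m$, and $x_1^m=x_2^m$ at column $m$, and tracking which of the multiset comparisons in condition (1) are strict versus equalities.
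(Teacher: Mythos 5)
Your part (b) is correct and is exactly the paper's argument: Lemma \ref{2} upgrades the strict inequality of part (a) to $S_1(x,A)\ge 2k+3$, and Lemma \ref{small} gives $D_1(A)\le 5$, hence $S_1(x-v,A)\ge 2(k-1)$. Part (a), however, has a genuine gap, and you acknowledge it yourself: the whole contradiction hinges on producing a shift $h\in H_m$ (so $h_1+\dots+h_m=0$ in $\mathbb Z_3$) that makes $0$ the most frequent element in every column of $hx$, and you defer precisely the step that would make this possible (``the rigidity \dots forces ties \dots a careful case analysis is required''). Without that analysis there is no proof. Moreover, one of the two ``optimality inequalities'' you do assert is not derivable by your method: the perturbation $a_{j_1}\mapsto a_{j_1}+1$, $a_{j_2}\mapsto a_{j_2}-1$ with $j_1=j_2$ leaves the tuple $A$ unchanged (the congruence $\sum a_j\equiv 2\pmod 3$ forbids changing a single coordinate by $\pm 1$ inside $\{0,1,2\}^m$), so the resulting inequality is the vacuous $0\ge 0$. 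Hence $x_0^m\le x_1^m$ and $x_1^j\le x_2^j$ do not follow; only the cross inequalities with $j_1\ne j_2$, such as $x_0^m\le x_{-a_{j_2}}^{j_2}$ and $x_{-a_{j_1}-1}^{j_1}\le x_1^m$, are legitimate, and your reduction of condition (2) to the order $x_0^j\ge x_2^j\ge x_1^j$ rests on the invalid one.

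For comparison, the paper's proof of (a) is short and direct: after a permutation there are only two shapes of $A$ (either $a_1=a_2=a_3=1$ or $a_1=1,a_2=2$, with $a_m=2$), and in each case a chain of elementary estimates using only conditions (1) and (2) shows $S_1(x,A)\ge \frac k2+\frac k2+0+k=2k$; equality then forces the offending columns completely (e.g.\ $x_1^1=0$, $x_0^1=x_2^1=k/2$, $x_0^m=0$, $x_1^m=x_2^m=k/2$), after which the single explicit shift $h=(2,0,\dots,0,1)\in H_m$ makes $0$ most frequent in every multiset, contradicting condition (4). If you wish to keep your minimizer/perturbation framework, you would need to extract the required ties from the legitimate cross inequalities together with conditions (1)--(3) and then actually carry out the postponed case analysis; as written, part (a) is not proved.
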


\begin{proof}
We prove part a) by contradiction: Suppose that we have an equality for some $A$. Since $a_m=2$ there are two cases regarding $a_1,\dots,a_{m-1}$: either three of them are equal to 1 or one of them is equal to 1 and other to 2. By acting with suitable $\sigma\in S_m$ we get to the case where either $a_1=a_2=a_3=1$ or $a_1=1,a_2=2$. Consider the first case. Then

\begin{align*}
S_1(x,A)&\ge x_0^1+2x_1^1+x_0^2+2x_1^2+x_0^3+2x_1^3+2x_0^m+x_2^m\\
&\ge \frac 12(x_0^1+x_1^1+x_2^1)+\frac 12(x_0^2+x_1^2+x_2^2)+(x_0^3-x_1^m)+(x_0^m+x_1^m+x_2^m)\\
&\ge \frac k2+\frac k2+0+k=2k.
\end{align*}

We use the ordering of multisets (condition $(1)$) for $(x_0^3-x_1^m)\ge 0$. To get an equality we must have equality everywhere. In particular, we get $x_0^1=x_2^1$, $x_1^1=x_1^3=x_0^m=0$ and $x_0^3=x_1^m$. From condition $(2) $ we obtain that $\{x_0^1,x_1^1,x_2^1\}=\{1/2,1/2,0\}\succeq \{x_0^m,x_1^m,x_2^m\}=\{0,x_1^m,x_2^m\}$, which means $x_1^m=x_2^m=1/2$. By acting with $(2,0,0,\dots,0,1)$ we get to the situation where $0$ is the most frequent element in all multisets, which is a contradiction with $(4)$.

The case $a_1=1,a_2=2$ is analogous:
\begin{align*}
S_1(x,A)&\ge x_0^1+2x_1^1+2x_0^2+x_2^2+2x_0^m+x_2^m\\
&\ge \frac 12(x_0^1+x_1^1+x_2^1)+\frac 12(x_0^2+x_1^2+x_2^2)+(x_0^2-x_1^m)+(x_0^m+x_1^m+x_2^m)\\
&\ge \frac k2+\frac k2+0+k=2k.
\end{align*}

Similarly, we get that $x_1^m=x_2^m=x_0^1=x_2^1=1/2$ and we can act with $(2,0,0,\dots,0,1)$ to obtain the same situation, where $0$ is the most frequent element in all multisets, contradicting $(4)$.

We continue with the proof of part b). Part a) together with Lemma \ref{2} implies that $S_i(x,A)\ge k+2$. Consequently, Lemma \ref{small} implies $S_1(x-v,A)\ge 2(k-1)$, for any $v\in V_m$.
\end{proof}

\begin{Proposition}\label{x_n>0}
Let $x\in kP_m\cap L_m$ satisfying $(1)-(4)$ and $x_0^m>0$. Then $x-v(0)\in (k-1)P_m$.
\end{Proposition}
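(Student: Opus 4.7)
The plan is to check that $v(0)$ is $x$-good and then verify every facet inequality for $(k-1)P_m$ on the difference $x - v(0)$, handing each case off to one of the three preceding technical results (Propositions~\ref{big}, \ref{A5} and Lemma~\ref{small}). So this proposition should be essentially assembly rather than new work.

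First I would establish that $v(0)$ is $x$-good, i.e. that $x_0^j \ge 1$ for all $1 \le j \le m$. For $j < m$, condition $(2)$ forces $x_0^j = \max\{x_0^j, x_1^j, x_2^j\}$, so since these are non-negative integers summing to $k$, we have $x_0^j \ge \lceil k/3\rceil \ge 1$. For $j = m$, the hypothesis $x_0^m > 0$ together with integrality gives $x_0^m \ge 1$. Thus $x - v(0)$ has non-negative entries, and evidently each block sums to $k-1$, so the simplex-equalities defining $(k-1)P_m$ are satisfied.

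Next I would run through the remaining facet inequalities $S_i(x - v(0), A) \ge 2(k-1)$ for $i \in \{1,2\}$ and $A = (a_1,\dots,a_m)$ with $a_1 + \dots + a_m \equiv 2 \pmod 3$, splitting by the value of $a_1 + \dots + a_m \in \{2, 5, 8, \dots\}$. For sum $\ge 8$, Proposition~\ref{big}(a) applies to any $x$-good $v$ and handles both $i = 1, 2$. For sum $= 5$ with $a_m < 2$, Proposition~\ref{big}(b) again covers both $i$. For sum $= 5$ with $a_m = 2$, Proposition~\ref{big}(c) covers $i = 2$, while for $i = 1$ I would invoke Proposition~\ref{A5}(b), which is exactly tailored to this case and applies since $v(0) \in V_m$.

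The only place where it matters that we subtract $v(0)$ specifically is the sum $= 2$ case. There Lemma~\ref{small} states that $D_i(A) = 2$ whenever $v = v(0)$, so $S_i(x - v(0), A) = S_i(x, A) - 2 \ge 2k - 2 = 2(k-1)$ for both $i$. This exhausts all facets, so $x - v(0) \in (k-1)P_m$. I do not anticipate any real obstacle: the only subtle point is ensuring $v(0)$ is $x$-good at $j = m$, which is exactly the content of the new hypothesis $x_0^m > 0$; all other bounds are produced by unrolling the earlier propositions for the specific vertex $v = v(0)$.
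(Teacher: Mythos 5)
Your proposal is correct and follows essentially the same route as the paper: check that $v(0)$ is $x$-good (using condition $(2)$ for $j<m$ and the hypothesis $x_0^m>0$ for $j=m$), dispatch the facets with $a_1+\dots+a_m\ge 5$ to Propositions~\ref{big} and \ref{A5}, and handle the sum-$2$ facets via Lemma~\ref{small} with $D_i(A)=2$ for $v=v(0)$ together with $S_i(x,A)\ge 2k$. The only difference is that you spell out the case split and the $x$-goodness argument in more detail than the paper does.
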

\begin{proof}
Clearly, $v(0)$ is $x$-good. Inequalities for $A$ with $a_1+\dots+a_m\ge 5$ hold by Propositions \ref{big} and \ref{A5}. For $a_1+\dots+a_m=2$ we have $S_i(x,A)\ge 2k$, then by Lemma \ref{small} we get $S_i(x-v(0),A)\ge 2(k-1)$. Therefore, all inequalities hold and $x-v(0)\in (k-1)P_m$.
\end{proof}

This means we are left with the special case $x_0^m=0$. In this case we cannot subtract $v(0)$ since it is not an $x$-good vertex. We will use some other vertex from $V_m$. We now have to also check facets for $m$-tuples $A$ with $a_1+\dots+a_m=2$. Again, we show that our assumptions imply that $x$ doe not lie on some of these facets:

\begin{Proposition}\label{nofacet2}
Let $x\in kP_m\cap L_m$ satisfy $(1)-(4)$, $x_0^m=0$ and  $A=(a_1,\dots,a_m)\in\{0,1,2\}^m$ with $a_1+\dots+a_m=2$ and $a_m=0$. Then $x$ does not belong to the facet $F_i(A)$.
\end{Proposition}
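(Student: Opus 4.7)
The goal is to upgrade the inequality $S_i(x, A) \geq 2k$ (which holds for every $x \in kP_m$) to a strict inequality under the assumptions $(1)$--$(4)$ together with $x_0^m = 0$. I would establish this directly by a termwise lower bound and then argue that equality is impossible.

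The key structural input I would extract first is the estimate $x_0^j \geq k/2$ for every $j < m$. Indeed, $x_0^m = 0$ combined with $x_0^m + x_1^m + x_2^m = k$ forces $\max\{x_1^m, x_2^m\} \geq k/2$, and by $(1)$ together with $(2)$ the maximum $x_0^j$ of the $j$-th multiset dominates the maximum of the $m$-th multiset. This single inequality drives everything that follows.

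Using $x_0^j \geq k/2$, I would bound each summand. For $j < m$, the three choices give $\langle u_0, x^j\rangle = x_1^j + 2x_2^j \geq 0$, $\langle u_1, x^j\rangle = x_0^j + 2x_1^j \geq k/2$, and $\langle u_2, x^j\rangle = 2x_0^j + x_2^j \geq k$, with the parallel bounds $0,\ k/2,\ k$ for the $w$-vectors. For $j = m$, the conditions $a_m = 0$ and $x_0^m = 0$ yield $\langle u_0, x^m\rangle = k + x_2^m \geq k$ and $\langle w_0, x^m\rangle = k + x_1^m \geq k$. Since $a_1 + \dots + a_{m-1} = 2$, the multi-index $(a_1, \dots, a_{m-1})$ contains either two $1$'s or exactly one $2$, and in either configuration the bounds sum to $2k$; this recovers $S_i(x, A) \geq 2k$ but now with explicit equality conditions.

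Finally I would rule out equality. Any $j < m$ with $a_j = 0$ would force $x_1^j = x_2^j = 0$, hence $x_0^j = k$, violating $(3)$. This disposes of the ``one $2$'' distribution for every $m \geq 3$ (since at least one $a_j$ with $j < m$ is then $0$) and of the ``two $1$'s'' distribution for $m \geq 4$. The only remaining case is $m = 3$ with $a_1 = a_2 = 1$. Here equality in the $j < m$ bounds forces $x_0^1 = x_0^2 = k/2$, with the remaining coordinates splitting as $(0, k/2)$ or $(k/2, 0)$ according to $i$, so each of the multisets for $j = 1, 2$ equals $\{k/2, k/2, 0\}$; equality in the $j = m$ bound forces $x_2^m = 0$ (if $i = 1$) or $x_1^m = 0$ (if $i = 2$), in both cases producing the multiset $\{k, 0, 0\}$. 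Since $\{k, 0, 0\} \succ \{k/2, k/2, 0\}$, this contradicts $(1)$. The main (mild) obstacle is keeping the equality analysis uniform across $i \in \{1, 2\}$ and across the two distributions of $A$; the symmetry between the $u$- and $w$-bounds and the fact that both produce the $m$-th multiset $\{k, 0, 0\}$ makes the argument essentially parallel.
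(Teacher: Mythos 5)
Your proof is correct and follows essentially the same strategy as the paper's: bound each summand of $S_i(x,A)$ from below so that the bounds total $2k$, then show the equality conditions force some $x_0^j=k$ or an ordering violation, contradicting conditions $(1)$--$(3)$. The only difference is organizational --- the paper handles both distributions of $A$ uniformly by deducing $x_2^m=0$, hence $x_1^m=k$, hence $x_0^1=k$, whereas you branch on whether some $a_j=0$ with $j<m$ and treat the $m=3$, two-$1$'s case by a separate multiset comparison --- but the underlying argument is the same.
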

\begin{proof}
We have two options: either two of the numbers $a_j$ are 1 or one of the numbers is 2. We can assume without loss of generality $a_1=a_2=1$ in the first case and $a_1=2$ in the second. Also, without loss of generality, we can assume $i=1$.

We start with the first case and compute $S_1(x,A)$:
\begin{align*}
S_1(x,A)&\ge x_0^1+2x_1^1+x_0^1+2x_1^1+x_1^m+2x_2^m\\
&\ge \frac12(x_0^1+x_1^1+x_2^1)+\frac 12(x_0^2+x_1^2+x_2^2)+(x_1^m+x_2^m)\ge \frac k2+\frac k2+k=2k.
\end{align*}

To get an equality we must have $x_2^m=0$ and $x_1^m=k$. Condition $(1)$ and $(2)$ then gives us $x_0^1=k$ which is contradiction with the condition $(3)$.

The case $a_1=2$ is similar:
\begin{align*}
S_1(x,A)&\ge 2x_0^1+x_1^1+x_1^m+2x_2^m\\
&\ge(x_0^2+x_1^2+x_2^2)+(x_1^m+x_2^m)\ge k+k=2k.
\end{align*}

Again, we must have $x_2^m=0$ and $x_1^m=k$ which is impossible by the same reason.
\end{proof}

For the point $x$ satisfying $(1)-(4)$ we can also assume that $x_1^m\ge x_2^m$. We can achieve that by acting with suitable $\varphi\in\Aut(\ZZ_3)$. Now we solve the case when $x$ does not lie on any facet $F_1(A)$ with $a_1+\dots+a_m=2,a_m=1$ by adding one assumption:

\begin{Proposition}\label{0nof}
Let $x\in kP_m\cap L_m$ satisfy $(1)-(4)$, $x_0^m=0,x_1^m\ge x_2^m$ and $x$ does not belong to any facet $F_1(A)$ with $a_1+\dots+a_m=2$, $a_m=1$. Moreover, suppose that there exists an index $j<m$ such that $x_2^j>0$. Then  $x-v(m,j)\in (k-1)P_m$.
\end{Proposition}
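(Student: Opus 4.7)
The plan is to verify that $x - v(m,j)$ satisfies every facet inequality of $(k-1)P_m$. First I would check that $v(m,j)$ is $x$-good: the only coordinates being decremented are $x_0^l$ for $l \notin \{j,m\}$, $x_2^j$, and $x_1^m$. The first is at least $1$ because condition $(2)$ makes $x_0^l$ the maximum of a nonnegative-integer multiset of sum $k$, hence $x_0^l \geq \lceil k/3 \rceil \geq 1$; the second is the standing hypothesis $x_2^j \geq 1$; and the third follows from $x_0^m = 0$, $x_1^m \geq x_2^m$, and $x_1^m + x_2^m = k$. This also makes the non-negativity and the ``sum equals $k-1$'' constraints of $(k-1)P_m$ immediate, and $x - v(m,j) \in L_m$ is automatic.

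Next I would loop over the facet inequalities $S_i(x - v(m,j), A) \geq 2(k-1)$ indexed by pairs $(i, A)$ with $a_1 + \dots + a_m \equiv 2 \pmod 3$. For $a_1 + \dots + a_m \geq 8$ the inequality is delivered by Proposition \ref{big}(a); for $a_1 + \dots + a_m = 5$ it is handled by Proposition \ref{big}(b)--(c) together with Proposition \ref{A5}(b), depending on whether $a_m = 2$ and on which index $i$ is being examined.

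The remaining regime is $a_1 + \dots + a_m = 2$, which is exactly where the new hypotheses are needed. By Lemma \ref{small} the drop $D_i(A) = S_i(x, A) - S_i(x - v(m,j), A)$ lies in $\{2, 5\}$; moreover, $D_i(A) = 2$ whenever $a_m = 2$, and $D_2(A) = 2$ whenever $a_m = 1$. These easy sub-cases reduce to $S_i(x, A) \geq 2k$, which holds because $x \in kP_m$. The two residual sub-cases are $a_m = 0$ (for both $i$) and $a_m = 1$ with $i = 1$, where $D_i(A)$ may equal $5$. For $a_m = 0$, Proposition \ref{nofacet2} asserts $x \notin F_i(A)$; for $a_m = 1$ with $i = 1$, the standing hypothesis of the proposition rules out $x \in F_1(A)$. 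In either residual sub-case $S_i(x, A) > 2k$, and Lemma \ref{2} then upgrades this to $S_i(x, A) \geq 2k + 3$, yielding $S_i(x - v(m,j), A) \geq 2k + 3 - 5 = 2(k-1)$.

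The main obstacle --- and what the strange-looking hypothesis is designed to defeat --- is precisely the worst-case drop $D_1(A) = 5$ in the sum-$2$ regime with $a_m = 1$, when the lone nonzero $a_{j'}$ lies at an index $j' \neq j$. Without the facet-avoidance hypothesis, one cannot upgrade $S_1(x, A) \geq 2k$ to $S_1(x, A) \geq 2k + 3$, and the bound would fail by $3$. The rest is a careful bookkeeping exercise pairing the sign tables of the weight vectors $u_a, w_a$ against the coordinates of $v(m, j)$, all of which has been preassembled in Lemma \ref{small}.
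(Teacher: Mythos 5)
Your proposal is correct and follows essentially the same route as the paper: verify $v(m,j)$ is $x$-good, dispatch the sums $\ge 5$ via Propositions \ref{big} and \ref{A5}, and in the sum-$2$ regime use Lemma \ref{small} to isolate the drop-$5$ sub-cases, which are exactly those excluded by Proposition \ref{nofacet2} and the facet-avoidance hypothesis, with Lemma \ref{2} upgrading strict inequality to $S_i(x,A)\ge 2k+3$. Your write-up is in fact more explicit than the paper's about which $(i,A)$ sub-cases need which ingredient.
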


\begin{proof}
Clearly, the vertex $v(m,j)$ is $x$-good. Then Propositions \ref{big} and \ref{A5} and Lemma \ref{small} imply that the inequalities for $m$-tuple $A$ with $a_1+\dots a_m\ge 5$ hold. For $a_1+\dots+a_m=2$ Lemma \ref{small} and Lemma \ref{nofacet2} implies $S_i(x-v,A)\ge S_i(x,A)-5\ge 2(k-1)$, where $S_i(x,A)\ge 2k+3$ follows from Lemma \ref{2}.

 Therefore, inequalities for every $m$-tuple $A$ hold, which proves our result.
\end{proof}

If $x$ belongs to facet $F_1(A)$ with $a_1+\dots+a_m=2,a_m=1$ we prove that it belongs only to one such facet and that we can as well subtract some vertex $v(m,j)$:

\begin{Proposition}\label{0f}
Let $x\in kP_m\cap L_m$ satisfy $(1)-(4)$, $x_0^m=0$, $x_1^m\ge x_2^m$ and $x$ belongs to some facet $F_1(A)$ for $a_1+\dots+a_m=2$, $a_m=1$. Moreover, suppose that there exists an index $j<m$ such that $x_2^j>0$. Then
\begin{itemize}
\item[a)] $x$ belongs to only one such facet.
\item[b)] There exists a vertex $v\in V_m$ such that $x-v\in (k-1)P_m$.
\end{itemize}
\end{Proposition}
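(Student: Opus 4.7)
The plan is to establish uniqueness in (a) via a single positivity identity obtained by combining the two facet equations, then in (b) take $v=v(m,j_0)$ with $j_0$ the unique index from (a) and verify all facet inequalities using the lemmas already developed.

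For (a), using $x_0^j+x_1^j+x_2^j=k$ and $x_0^m=0$, the equation $S_1(x,A)=2k$ (with $a_{j_0}=a_m=1$ and all other $a_j=0$) rewrites as
\begin{equation*}
\sum_{j\neq j_0,m}(x_1^j+2x_2^j)+(x_1^{j_0}-x_2^{j_0})+(x_1^m-x_2^m)=0.
\end{equation*}
If $x$ also lies on $F_1(A')$ with $a_{j_0'}=a_m=1$ for some $j_0'\neq j_0$, adding this identity to its counterpart for $A'$ and regrouping the $j=j_0$ and $j=j_0'$ contributions yields
\begin{equation*}
2\sum_{j\neq j_0,j_0',m}(x_1^j+2x_2^j)+(2x_1^{j_0}+x_2^{j_0})+(2x_1^{j_0'}+x_2^{j_0'})+2(x_1^m-x_2^m)=0.
\end{equation*}
Every summand is non-negative (we are using $x_1^m\ge x_2^m$), so each must vanish; in particular $x_1^{j_0}=x_2^{j_0}=0$, forcing $x_0^{j_0}=k$, contradicting (3). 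Hence $j_0$ is unique, which proves (a).

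For (b), set $v=v(m,j_0)$. Since $x_0^m=0$, $x_1^m+x_2^m=k\ge 1$ and $x_1^m\ge x_2^m$, we have $x_1^m\ge 1$. From the first identity above, $x_2^{j_0}-x_1^{j_0}=\sum_{j\neq j_0,m}(x_1^j+2x_2^j)+(x_1^m-x_2^m)\ge 1$ because each $x_1^j+2x_2^j$ for $j\neq j_0,m$ is a non-negative integer that would equal zero only if $x_0^j=k$ (excluded by (3)), and such an index $j$ exists since $m\ge 3$. Thus $x_2^{j_0}\ge 1$ and $v$ is $x$-good. It remains to check $S_i(x-v,A)\ge 2(k-1)$ for every admissible $A$. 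The cases $\sum a_j\ge 8$ and $\sum a_j=5$ are handled by Propositions~\ref{big} and~\ref{A5}. For $\sum a_j=2$: if $a_m=0$, Proposition~\ref{nofacet2} together with Lemma~\ref{2} gives $S_i(x,A)\ge 2k+3$, combined with $D_i(A)\le 5$ from Lemma~\ref{small}; if $a_m\in\{1,2\}$, Lemma~\ref{small} gives $D_i(A)=2$ in every subcase except $i=1$, $a_m=1$ with the other `$1$' located at $j_0'\neq j_0$, in which case (a) and Lemma~\ref{2} yield $S_1(x,A)\ge 2k+3$, so again $S_1(x-v,A)\ge 2k-2$. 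Hence $x-v\in (k-1)P_m$.

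The main obstacle is identifying the correct recombination of the two facet equations in (a): a direct subtraction only gives $x_2^{j_0}=x_2^{j_0'}$ and no contradiction, but after substituting $x_0^j+x_1^j+x_2^j=k$ their sum collapses the $j_0$ and $j_0'$ contributions into $2x_1^{j_0}+x_2^{j_0}$ and $2x_1^{j_0'}+x_2^{j_0'}$, both manifestly non-negative, which together with condition~(3) immediately triggers the contradiction.
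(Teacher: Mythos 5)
Your proof is correct and follows essentially the same route as the paper: for (a) you sum the two facet equalities and force $x_1^{j_0}=x_2^{j_0}=0$, contradicting condition (3) (the paper phrases this as a lower bound $S_1(x,A)+S_1(x,A')\ge 4k$ with equality analysis, which is the same computation), and for (b) you extract $x_2^{j_0}\ge 1$ from the single facet identity to make $v(m,j_0)$ $x$-good and then dispatch the remaining inequalities with Propositions \ref{big}, \ref{A5}, \ref{nofacet2} and Lemmas \ref{2}, \ref{small} exactly as the paper does. The only cosmetic difference is that you normalize the facet equations into identities with manifestly non-negative summands before combining them.
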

\begin{proof}

To prove part a) suppose that $x$ belongs to two such facets. By acting with suitable permutation from $\mathbb S_m$ we can get to the situation where these facets are $F_1((1,0,\dots,0,1))$ and $F_1(0,1,0,\dots,0,1)$.

Then we compute the following sum:

\begin{align*}
S_1(x,(1,0,\dots,0,1))+S_1(x,(0,1,0,\dots,0,1))
\end{align*}
\begin{align*}
&\ge (x_0^1+3x_1^1+2x_2^1)+(x_0^2+3x_1^2+2x_2^2)+(4x_1^m)\\
&\ge k+k+2(x_1^m+x_2^m)=4k.
\end{align*}

We know that there must be an equality and therefore $x_1^1=x_2^1=0$ which is a contradiction with condition $(3)$.

For part b) we again suppose without loss of generality that $x\in F_1((1,0,\dots,0,1)$. We show  by contradiction that $x_2^1>0$. Suppose that $x_2^1=0$. Then
\begin{align*}
S_1(x,(1,0,\dots,0,1))&\ge (x_0^1+2x_1^1)+2x_1^m\\
&\ge (x_0^1+x_1^1) +x_1^1+(x_1^m+x_2^m)\ge k+0+k=2k
\end{align*}

To get an equality we must have $x_1^1=0$ and $x_0^1=k$ which is contradiction with condition $(3)$. This implies that vertex $v(m,1)$ is $x$-good. We claim that $x-v(m,1)$ satisfy all inequalities and hence belongs to $(k-1)P_m$.

Propositions \ref{big} and \ref{A5} and Lemma \ref{small} imply that the inequalities for $m$-tuple $A$ with $a_1+\dots a_m\ge 5$ hold. Lemma \ref{small} together with Lemma \ref{nofacet2} and part a) gives the inequalities for $a_1+\dots a_m=2$.

\end{proof}

We are left only with one special case. That is the case when $x$ does not belong to facet $F_1(A)$ with $a_1+\dots+a_m=2,a_m=1$ and $x_2^j=0$ for all $1\le j\le m-1$.

\begin{Proposition}\label{mostspecific}
Let $x\in kP_n\cap L_n$ satisfy $(1)-(4)$, $x_0^m=0,x_1^m\ge x_2^m$ and $x$ does not belong to any $F_1(A)$ for $a_1+\dots+a_m=2$, $a_m=1$. Moreover, suppose that $x_2^j=0$ for all $1\le j\le m-1$. Then  $x-v(1,m)\in (k-1)P_m$.

\end{Proposition}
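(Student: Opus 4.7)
The plan is to show that $v(1,m)$ is $x$-good and then to verify that $x - v(1,m)$ satisfies every facet inequality of $(k-1)P_m$. The first step is routine: the hypothesis $x_2^\ell = 0$ for $\ell < m$ combined with $(2)$ and $(3)$ gives $x_0^\ell \ge x_1^\ell \ge 1$ for every $\ell < m$, and if $x_2^m$ were $0$ then $x_1^m = k$, so $(1)$ would force $x_0^\ell \ge k$ for $\ell < m$, contradicting $(3)$. Hence $v(1,m)$ is $x$-good.

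I would then split the remaining facet inequalities for $x - v(1,m)$ according to the value of $a_1 + \dots + a_m$. For $a_1+\dots+a_m \ge 5$, the bounds from Propositions \ref{big} and \ref{A5} apply unchanged. For $a_1+\dots+a_m = 2$ with $a_m = 0$, Proposition \ref{nofacet2} yields $S_i(x,A) \ge 2k+3$, and Lemma \ref{small} gives $D_i(A) \le 5$, which is enough; the subcase $a_m = 2$ is dispatched by a direct check that $D_i(A) = 2$ when $v = v(1,m)$. This leaves only $a_m = 1$, which in turn splits into $A = (1,0,\dots,0,1)$ and $A$ having $1$'s in positions $j$ and $m$ (with $2 \le j \le m-1$) and $0$'s elsewhere. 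A direct computation of the drops $D_i$ for $v = v(1,m)$ gives $D_1(A) = 2$ in both subcases and $D_2(A) = 2$ in the first, so only $F_2(A)$ in the second subcase resists the routine argument.

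The key obstacle is thus to show $S_2(x, A) \ge 2k+3$ in this remaining subcase, since there $D_2(A) = 5$. My plan is to invoke the facet inequality $S_1(x, A') \ge 2k$ for $A' = (0,\dots,0,2)$; under our hypotheses $x_2^\ell = 0$ for $\ell < m$ and $x_0^m = 0$, it collapses to
\[
\sum_{\ell < m} x_1^\ell + x_2^m \ge 2k.
\]
On the other hand, a direct expansion of $S_2(x,A)$ for $A$ in the subcase under consideration, together with some regrouping, gives
\[
S_2(x,A) = 2\left(\sum_{\ell < m} x_1^\ell + x_2^m\right) - 2 x_1^j + x_0^j.
\]
Substituting the previous inequality and using $x_0^j = k - x_1^j$ yields $S_2(x,A) \ge 5k - 3 x_1^j$, and since $(2)$ and $(3)$ force $x_1^j \le x_0^j \le k-1$, we conclude $S_2(x,A) \ge 2k+3$. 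Therefore $S_2(x - v(1,m), A) \ge 2(k-1)$, and all facet inequalities are verified.
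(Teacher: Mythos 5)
Your proof is correct and follows essentially the same route as the paper: establish that $v(1,m)$ is $x$-good, dispatch all facets except $F_2(A)$ with $a_1+\dots+a_m=2$, $a_m=1$ via Propositions \ref{big}, \ref{A5} and the drop computations of Lemma \ref{small}, and then control the remaining case by comparing $S_2(x,A)$ with the facet inequality $S_1(x,(0,\dots,0,2))\ge 2k$. The only (harmless) difference is that you turn that comparison into an exact identity plus the bound $x_1^j\le k-1$ from $(2)$ and $(3)$ to get $S_2(x,A)\ge 2k+3$ directly, whereas the paper argues by contradiction that equality in the chain would force $x_2^m=0$ and then invokes Lemma \ref{2}.
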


\begin{proof}
We notice that $x_1^1$ and $x_2^m$ are non-zero due to $(3)$. This means that the vertex $v(1,m)$ is $x$-good. We need to check the inequalities. As in the previous cases everything is covered by Propositions \ref{big} and \ref{A5} and Lemmas \ref{small} and \ref{nofacet2} except the case of facets $F_2(A)$ with $a_1+\dots+a_m=2,a_m=1$. If we can prove that $x$ does not lie on any such facet then we can conclude by Lemma \ref{small}.

Suppose by contradiction that (without loss of generality) $x \in F_2((1,0,\dots,0,1))$.
  Then
\begin{align*}
k=S_2(x,(1,0,\dots,0,1))&=x_0^1+2(x_2^1+\dots+x_1^{m-1})+2x_2^m\\
&\ge (x_1^1+\dots+x_1^{m-1}+x_2^m)\\
&=S_1(x,(0,0,\dots,0,2)\ge k.
\end{align*}

To get the equality we must have $x_2^m=0$ but we have already shown that it is not the case.

\end{proof}

Now we can sum up and prove Theorem \ref{normalz3}.

\begin{proof} (of Theorem \ref{normalz3})

Consider point $x\in kP_m\cap L_m$ for some positive integer $k$.  To prove normality of $P_m$ it is sufficient for $k\ge 2$ to prove that there exists a vertex $v$ of $P_m$ such that $x-v\in (k-1)P_m$. Also it is sufficient to consider only points $x$ which satisfy $(1)-(3)$. Then the existence of such $v$ is implied by Propositions \ref{0}, \ref{x_n>0}, \ref{0nof}, \ref{0f} and \ref{mostspecific}.
\end{proof}

 \section*{Acknowledgements}
 The authors would like to thank very much Mateusz Micha\l ek for introducing to the topic and giving helpful advice. Both authors were supported by the Max Planck Institute for Mathematics in the Sciences. They used Polymake \cite{polymake} for their working examples.

\end{document}